\def\hmath$#1${\texorpdfstring{{\rmfamily\textit{#1}}}{#1}}
\newenvironment{chapquote}[2][2em]
  {\setlength{\@tempdima}{#1}%
   \def\chapquote@author{#2}%
   \parshape 1 \@tempdima \dimexpr\textwidth-2\@tempdima\relax%
   \itshape}
  {\par\normalfont\hfill--\ \chapquote@author\hspace*{\@tempdima}\par\bigskip}
\title{An Accelerated Lyapunov Function\\ for Polyak's Heavy-Ball on Convex Quadratics}
\author{Antonio Orvieto\\
ETH Z\"urich}
\date{}
\definecolor{emerald}{rgb}{0.41, 0.88, 0.57}
\newsavebox\CBox
\newcommand\hcancel[2][0.5pt]{%
  \ifmmode\sbox\CBox{$#2$}\else\sbox\CBox{#2}\fi%
  \makebox[0pt][l]{\usebox\CBox}%
  \rule[0.5\ht\CBox-#1/2]{\wd\CBox}{#1}}
\theoremstyle{definition}
\newtheorem{theorem}{Theorem}
\newtheorem{proposition}{Proposition}
\newtheorem{lemma}{Lemma}
\theoremstyle{remark}
\renewcommand{\d}{{\mathrm{d}}}
\def\eqref#1{equation~\ref{#1}}
\def\1{\bm{1}}
\DeclareMathAlphabet{\mathsfit}{\encodingdefault}{\sfdefault}{m}{sl}
\SetMathAlphabet{\mathsfit}{bold}{\encodingdefault}{\sfdefault}{bx}{n}
\newcommand{\R}{\mathbb{R}}
\DeclareMathOperator*{\argmin}{arg\,min}
\begin{document}
\maketitle
\begin{abstract}
In 1964, Polyak showed that the Heavy-ball method, the simplest momentum technique, accelerates convergence of strongly-convex problems in the vicinity of the solution. While Nesterov later developed a globally accelerated version, Polyak's original algorithm remains simpler and more widely used in applications such as deep learning. Despite this popularity, the question of whether Heavy-ball is also globally accelerated or not has not been fully answered yet, and no convincing counterexample has been provided.  This is largely due to the difficulty in finding an effective Lyapunov function: indeed, most proofs of Heavy-ball acceleration in the strongly-convex quadratic setting rely on eigenvalue arguments. Our study adopts a different approach: studying momentum through the lens of quadratic invariants of simple harmonic oscillators. By utilizing the modified Hamiltonian of Stormer-Verlet integrators, we are able to construct a Lyapunov function that demonstrates an $O(1/k^2)$ rate for Heavy-ball in the case of convex quadratic problems. This is a promising first step towards potentially proving the acceleration of Polyak's momentum method and we hope it inspires further research in this field.
\end{abstract}

\section{Introduction}
The problem of unconstrained continuous convex optimization consists in finding an element of the set $\argmin_{q\in\R^d} f(q)$, for some lower bounded convex $f:\R^d\to\R$, generally assumed to be regular, e.g., twice continuously differentiable: $f\in C^2(\R^d,\R)$.
\subsection{Acceleration in discrete- and continuous-time}
In 1979 Nemirovsky and Yudin \cite{nemirovskii1979problem} showed that, if $f$ is convex and $L$-smooth\footnote{A differentiable function $f:\R^d\to\R$ is said to be $L$-smooth if it has $L$-Lipschitz gradients.}, no gradient-based optimizer can converge to a solution faster than $O(1/k^2)$, where $k$ is the number of gradient evaluations\footnote{This lower bound holds just for $k<d$ hence it is only interesting in the high-dimensional setting.}. While Gradient Descent~(\texttt{GD}) converges like $O(1/k)$, the optimal rate $O(1/k^2)$ is achieved by the celebrated Accelerated Gradient Descent~(\texttt{AGD}) method, proposed by Nesterov in 1982~\cite{nesterov1983method}: starting from $p_0=0$ and a random $q_0$, the approximation $q_k$ to a problem solution $q^*$ is computed iteratively as\footnote{Many similar writings are possible. Here, we consider the particular version studied in~\cite{su2016} and a physicist notation, where $p_k$ is a velocity variable. This makes the connection to continuous-time cleaner and consistent with recent work on the geometry of momentum methods~\cite{francca2020dissipative}.} 
\begin{equation*}
    \tag{\texttt{AGD}}
    \begin{cases}
    q_{k+1} = q_k + \beta_k h p_k - h^2 \nabla f(q_k + \beta_k h p_k)\\
    p_{k+1} = (q_{k+1}-q_k)/h
    \end{cases}.
\end{equation*}
where $\beta_k = \frac{k-1}{k+2}$ and $h^2$ is the step-size~(we use the notation $h^2$ instead of the standard $\eta$ for a reason which will become apparent in the next sections). Interestingly, the different behaviour of \texttt{GD} and \texttt{AGD} is retained in the continuous-time limit~(as the step-size vanishes), recently studied by Su, Boyd and Candes \cite{su2016}, but already present in the seminal works of Polyak~\cite{polyak1964some} and Gavurin~\cite{gavurin1958nonlinear}:
\begin{equation*}
\tag{\texttt{GD-ODE}}
\dot q + \nabla f(q) = 0;
\end{equation*}
\begin{equation*}
\tag{\texttt{AGD-ODE}}
\ddot q + \frac{r}{t} \dot q + \nabla f(q) = 0.
\end{equation*}
Namely, we have that \texttt{GD-ODE} converges like $O(1/t)$ and \texttt{AGD-ODE}~(with $r\ge3$) like $O(1/t^2)$, where $t>0$ is the time variable. This result gave researchers a new tool to grasp the baffling essence~(see discussion in \cite{allen2014linear,su2016}) of accelerated optimizers, and led to the design of many novel fast interpretable algorithms~\cite{alimisis2019continuous,krichene2015,xu2018accelerated,wilson2019accelerating}.

\subsection{Evaluating gradients at a shifted position}
\label{sec:preliminary_experiments}
There are two modifications of \texttt{GD} that bring \texttt{AGD} about:
\begin{enumerate}[leftmargin=0.68cm]
    \item  inclusion of the momentum term (i.e. using $\beta_k\ne 0$);
    \item change in gradient extrapolation point:
$\textcolor{blue}{\nabla f(q_k)}\to\textcolor{magenta}{\nabla f(q_k + \beta_k h p_k)}.$
\end{enumerate}
Questions arise immediately:
\begin{center}
    \textit{Are both these modifications necessary for acceleration?\\
    In particular, is evaluating the gradient at non-iterate points\\ crucial or even necessary for acceleration?}
\end{center}
To put these questions in the right historical context, one has to go back to Polyak's 1964 seminal paper~\cite{polyak1964some}, where the very first momentum method was proposed for $C^2$ and $\mu$-strongly-convex problems\footnote{In the strongly-convex case, $\beta_k$ is not monotonically increasing, but is instead chosen to be a constant dependent on the strong-convexity modulus $\mu$, that is $\beta = \left(\frac{\sqrt{L}-\sqrt{\mu}}{\sqrt{L}+\sqrt{\mu}}\right)^2$.}. Using an elegant functional-analytic argument on multistep methods, Polyak proved that momentum alone --- without shifted gradient evaluation (a.k.a. Heavy-ball (\texttt{HB}), see equation below) --- is able to achieve acceleration\footnote{Here to be intended as a dependency of the rate on the square root of the condition number $L/\mu$.} in a neighborhood of the solution. This local argument becomes of course global in the quadratic case (for a simplified proof, see Proposition 1 in~\cite{lessard2016analysis}).
\begin{equation*}
    \tag{\texttt{HB}}
    \begin{cases}
    q_{k+1} = q_k + \beta_k h p_k - h^2 \nabla f(q_k)\\
    p_{k+1} = (q_{k+1}-q_k)/h
    \end{cases}.
\end{equation*}
Despite the many attempts, nobody in the last 56 years has been able to show that \texttt{HB} has a global (i.e. for any initialization) accelerated rate --- neither in the strongly-convex case~(using a fixed momentum) nor in the non-strongly-convex case~(using an increasing $\frac{k-1}{k+2}$ momentum). Beyond the technical difficulty, another plausible reason may also be lack of interest, as the introduction of Nesterov's globally accelerated method in 1982, that overshadowed the conceptually simpler method from Polyak.

However, many researchers in the last decade, supported by numerical evidence and by the success of Heavy-ball in deep learning~\cite{kingma2014adam}, expressed their belief that \texttt{HB} is accelerated:
\vspace{3mm}

\begin{chapquote}{Ghadimi et al. \cite{ghadimi2015global}, 2015}
[\dots] supported by the numerical
simulations we envisage that the convergence factor could be strengthened even further. This
is indeed left as a future work.\vspace{-1mm}
\end{chapquote}

\begin{chapquote}{Gorbunov et al. \cite{gorbunov2019stochastic}, 2019}
Despite the long history of this approach,
there is still an open question whether the heavy ball method converges to the optimum globally with
accelerated rate when the objective function is twice continuous differentiable.\vspace{-3mm}
\end{chapquote}

\begin{chapquote}{Muehlebach and Jordan \cite{muehlebach2020optimization}, 2020}
 Neither the evaluation of the gradient at a
shifted position, nor a specifically engineered damping parameter, as for example proposed
in Nesterov (2004, Sec. 2.2), \textit{seem}\footnote{After talking to the first author, we decided to replace ``\textit{are}'' (as in the original preprint) with ``\textit{seem}'': indeed, the argument in~\cite{muehlebach2020optimization} is asymptotic and therefore somewhat equivalent to the one of Polyak~\cite{polyak1964some}.} necessary.
\vspace{-3mm}
\end{chapquote}
Other researcher believe \texttt{HB} is not accelerated:
\vspace{3mm}

\begin{chapquote}{Shi et al. \cite{shi2018understanding}, 2018}
If we can translate this argument to the discrete case we can understand why \texttt{AGD} achieves
acceleration globally for strongly-convex functions but the Heavy-ball method does not.
\end{chapquote}
While on the theoretical side the opinion is mixed, on the experimental side no numerical simulation\footnote{In \cite{lessard2016analysis}, the authors show that there exist a strongly-convex smooth function such that Heavy-ball does not converge. However, as also pointed out by Ghadimi et al.~\cite{ghadimi2015global}, such function is not $C^2$, and that a big step-size is used --- which violates the convergence conditions of Thm.~4 in~\cite{ghadimi2015global}. As such, this function does not constitute a proper counterexample.} has been able to show that \texttt{HB} is not accelerated. In Figure~\ref{fig:conjecture}, we provide two examples for the non-strongly-convex case (i.e. $\mu$ very small, such that an increasing momentum is preferable, leading $1/k^2$ convergence as opposed to $(1-\sqrt{\mu/L})^k$). In particular, we show that \texttt{HB} is comparable to \texttt{AGD} through the lens of the pathological lower-bounding quadratic example introduced by~\cite{nemirovskii1979problem} and used to construct the $O(1/k^2)$ bound in convex optimization --- at least until the effect of non-trivial strong-convexity becomes dominant (at around $f(q_k)=10^{-6}$).
 \begin{figure}[ht]
\centering
\begin{subfigure}{0.45\textwidth}\centering\includegraphics[width=1\columnwidth]{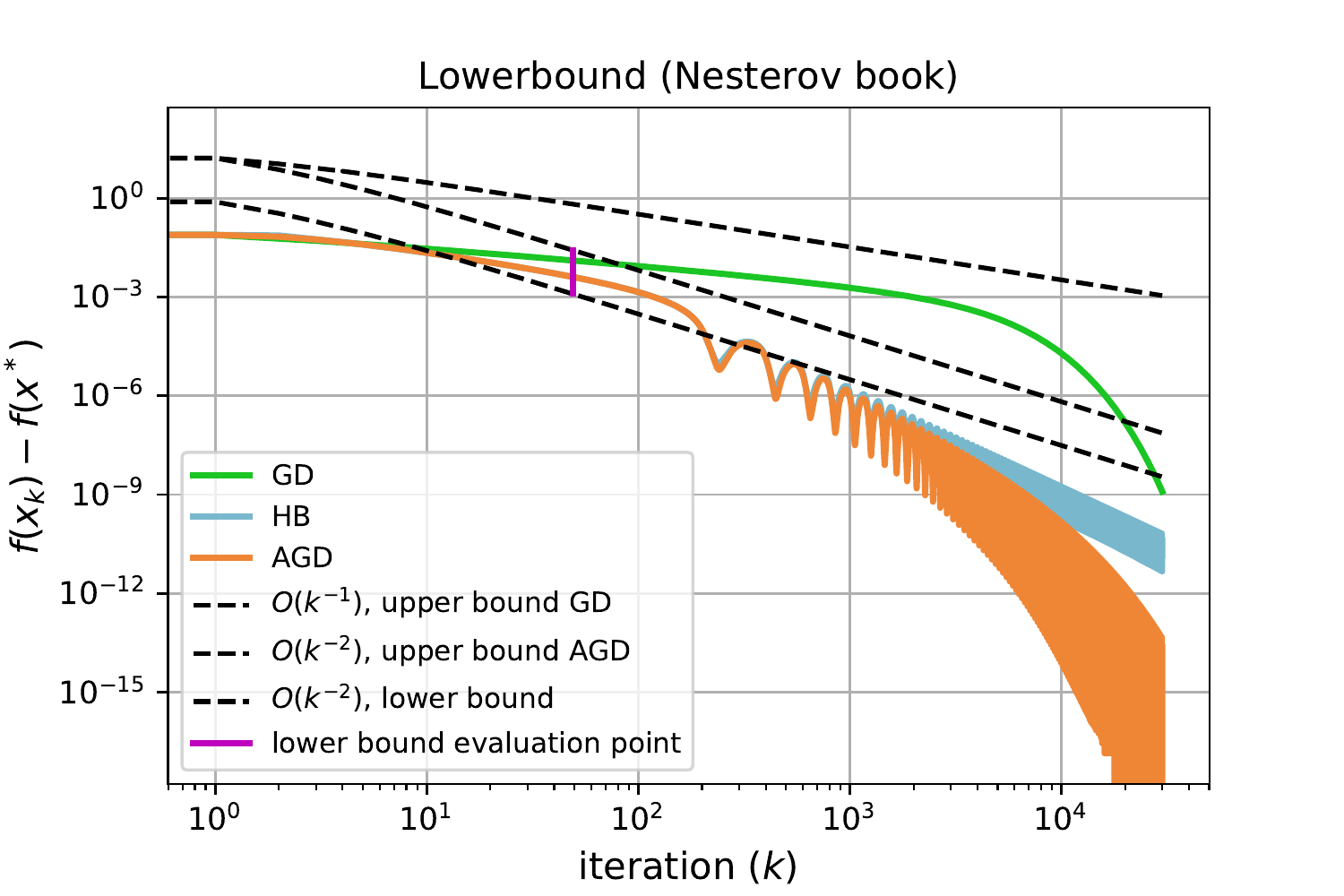}
\caption{\texttt{HB} and \texttt{AGD} on the worst-case (lower bound) quadratic objective from Nesterov~\cite{nesterov2018lectures}.}\end{subfigure}
\hspace{1cm}
\begin{subfigure}{0.45\textwidth}\centering\includegraphics[width=1\columnwidth]{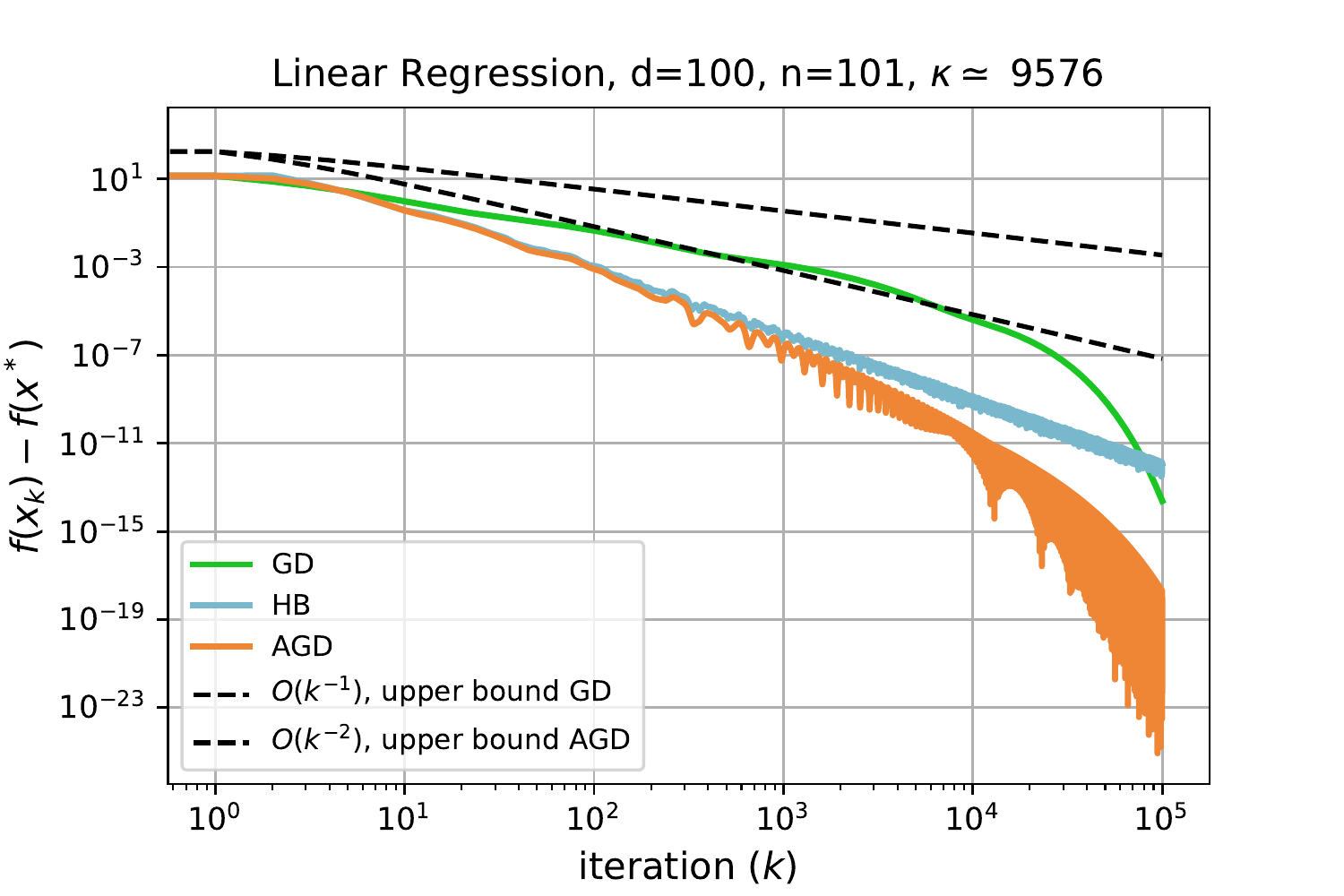}\caption{\texttt{HB} and \texttt{AGD} on ill-conditioned linear regression. The optimal step-size $1/L$ was used.}\end{subfigure}
\caption{For both examples \texttt{HB} with momentum  $\frac{k-1}{k+2}$ exhibits an accelerated $1/k^2$ convergence rate, even though\texttt{ AGD} with momentum  $\frac{k-1}{k+2}$ is faster in a neighborhood of the optimizer due to strong-convexity. Instead, \texttt{GD} violates the Nesterov $O(1/k^2)$ upper bound. We recall that, while Nesterov's upper bound holds for all $k>0$, the $O(1/k^2)$ lower bound~(originally discovered by Nemirovski and Yudin~\cite{nemirovskii1979problem}) only holds at $k = d/2$ (for more details, check the discussion in~\cite{nesterov2018lectures}).}
\label{fig:conjecture}
\vspace{-3mm}
\end{figure}
\subsection{Contributions}
The purpose of the manuscript at hand is to study the effect of shifts in gradient extrapolation points on acceleration in convex optimization (i.e. to study the difference between Heavy-ball and Nesterov's method). In particular, the next pages are organized as follows:
\vspace{-1mm}
\begin{enumerate}[leftmargin=0.48cm]
    \item We start from a continuous-time argument: inspired by a recent idea from Flammarion and Bach~\cite{flammarion2015averaging}, in Section~\ref{sec:quadratic_invariants_cont} we show how \texttt{AGD-ODE} with damping $2/t$ can be derived from the equation of a simple harmonic oscillator: $\ddot u = - Au$. By using Lyapunov equations and a simple change of variables, we retrieve the Lyapunov function proposed by Su, Boyd and Candes~\cite{su2016} to prove a rate $O(1/t^2)$ for \texttt{AGD-ODE}. This procedure is principled and leads to many insights on Lyapunov function design.
    \item In Section~\ref{sec:quadratic_invariants_disc}, we apply the same methodology in discrete time, and show that \texttt{HB} with momentum $\frac{k-1}{k+1}$ can be derived from the St{\"o}rmer--Verlet discretization of the simple harmonic oscillator. Solving again Lyapunov's equations, we are able to show an $O(1/k^2)$ rate for a Heavy-ball argorithm for convex quadratics. While this rate is already present in~\cite{flammarion2015averaging}, our proof technique is different as it relies on a Lyapunov function as opposed to an eigenvalue analysis.
    \item In Section \ref{sec:gener_quadratic}, by generalizing the discrete-time Lyapunov function found in Section~\ref{sec:quadratic_invariants_disc} we derive a modified Heavy-ball method
    $$q_{k+1} = q_k + \frac{k-1}{k+r-1} (q_k-q_{k-1}) - h^2\frac{k+\frac{r-2}{2}}{k+r-1} \nabla f(q_k),$$
    with a rate of convergence $O(1/k^2)$ for any $k\ge2$ and $r\ge2$. Our result not only generalizes the theory in~\cite{flammarion2015averaging}, but also provides an interesting connection between the continuous and the discrete --- as the used Lyapunov function converges, in the limit $h\to 0$, to the one used in~\cite{su2016} for $r\ge2$.
\end{enumerate}

\paragraph{Recent related works.} Very recently, Wang et al.~\cite{wang2022provable} proved that Heavy-ball is accelerated for a class of functions satisfying the Polyak-\L{}ojasiewicz condition. Instead, here we provide a Lyapunov function for the non-strongly-convex setting, where the Polyak-\L{}ojasiewicz constant vanishes. We remark that, for strongly-convex quadratic potentials, Heavy-ball is already known to achieve acceleration~\cite{lessard2016analysis}. However, the eigenvalue argument used in~\cite{lessard2016analysis} cannot be leveraged in the non-strongly-convex setting, where the minimum eigenvalue can be arbitrarily low. As such, our work provides insights on how to construct effective Lyapunov functions in the non-quadratic case, where Lyanonov arguments are often the go-to option.

\section{From quadratic invariants of oscillators to accelerated rates}
\label{sec:quadratic_invariants}

Our procedure in this section is inspired by a beautiful idea presented by Flammarion and Bach~\cite{flammarion2015averaging}: it is sometimes possible to translate a time-dependent convergence rate problem into a time-independent stability problem. Here we go one step further, and show how, with an additional step (computation of quadratic invariants), it is possible to derive Lyapunov functions and rates for the corresponding algorithms. We first illustrate the idea in continuous-time and then proceed with the discrete-time analysis.

Our starting point is the following ODE:
\begin{equation*}
\tag{\texttt{AGD-ODE}2}
\ddot q + \frac{2}{t} \dot q + \nabla f(q) = 0.
\end{equation*}
From the analysis in~\cite{su2016}, we know that on a quadratic $f(q) = f^*+\frac{1}{2}\langle (q-q^*),A(q-q^*)\rangle $, with $A$ positive semidefinite and $f^*\in\R$, the solution converges to $q^*\in\argmin_{x\in\R^d} f(q)$ at the rate $O(1/t^2)$. To prove this rate, the authors in~\cite{su2016} use the following Lyapunov function:
\begin{equation}
    V(q,t) = 2t^2 (f(q)-f^*) + \|t\dot q + (q-q^*)\|^2.
    \label{eq:ly_su_boyd_quadratics}
\end{equation}
 We show here a \textit{constructive way} to derive $V$~(Section~\ref{sec:quadratic_invariants_cont}) and then~(Section~\ref{sec:quadratic_invariants_disc}) we apply the same procedure to get a Lyapunov function for Heavy-ball (i.e., the discretization). For simplicity, we consider here $f^*=0$ and $q^* =0$.
\subsection{Lyapunov functions from continuous-time invariants}
\label{sec:quadratic_invariants_cont}

Consider an harmonic oscillator on the potential $f(u) = \frac{1}{2}u^\top A u$, i.e. $\ddot u = -A u$. From basic physics, we know that such a system is marginally stable (bounded dynamics). By choosing $u = tq$ we get $\dot u = q + t\dot q $ and $\ddot u = \dot q + \dot q + t\ddot q$. This implies
\begin{equation*}
    \dot q + \dot q + t\ddot q = \ddot u = -A t q \ \quad
    \Rightarrow \quad \ \ddot q +\frac{2}{t} \dot q + Aq =0. 
\end{equation*}
    That is, \texttt{AGD-ODE} can be reconstructed from a simple linearized pendulum. By introducing the variable $v = \dot u$, we can write the pendulum in phase space as a linear dynamical system
    $$\begin{pmatrix}
    \dot u\\\dot v
    \end{pmatrix} = \begin{pmatrix}
    0 & I \\ -A& 0
    \end{pmatrix}\begin{pmatrix}
    u\\ v
    \end{pmatrix}.$$
    Hence, the pendulum has the form $\dot y = F y$, where $y = (u,v)$. We would now like to get a Lyapunov function for this system. To do this, we recall a fundamental proposition~(check Thm. 4.6. in \cite{khalil2002nonlinear}).
    \begin{proposition}[Continuous-time Lyapunov equations]
    The linear system $\dot y = Fy$ is Lyapunov stable if and only if  for all positive semidefinite matrices $Q$, there exists a symmetric matrix $P$ such that
    \begin{equation}
        PF+F^TP = -Q.
        \label{eq:lyapunov_eq_cont}
    \end{equation}
    Moreover, $V(y) = y^T P y$ is a Lyapunov function and $\dot V (y) = -y^T Q y$.
    \end{proposition}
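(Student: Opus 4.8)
The plan is to prove the two implications separately, reading off the ``moreover'' claim as a byproduct of the easy direction.

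\textbf{Sufficiency and the Lyapunov property.} First I would take a symmetric $P$ solving $PF + F^\top P = -Q$ as given and differentiate $V(y) = y^\top P y$ along a trajectory of $\dot y = Fy$. Using $\dot y = Fy$,
\[
\dot V(y) = \dot y^\top P y + y^\top P \dot y = y^\top\!\big(F^\top P + P F\big)y = -\,y^\top Q y \le 0,
\]
where the inequality uses $Q \succeq 0$. This is exactly the ``moreover'' identity $\dot V(y) = -y^\top Q y$, and since $V$ is non-increasing along trajectories it certifies stability; to read $V$ as a genuine Lyapunov function one additionally wants $P \succ 0$, which I would keep as a standing requirement on the chosen $P$.

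\textbf{Necessity.} For the converse I would exhibit $P$ explicitly via $P = \int_0^\infty e^{F^\top s} Q\, e^{F s}\, \d s$. Granting convergence, symmetry of $P$ follows from $Q = Q^\top$, and the Lyapunov equation drops out of the fundamental theorem of calculus: since $\frac{\d}{\d s}\big(e^{F^\top s} Q e^{F s}\big) = F^\top e^{F^\top s} Q e^{F s} + e^{F^\top s} Q e^{F s} F$, integrating over $[0,\infty)$ gives
\[
F^\top P + P F = \big[\,e^{F^\top s} Q e^{F s}\,\big]_0^\infty = -Q ,
\]
once the boundary term at infinity is shown to vanish.

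\textbf{Main obstacle.} The whole weight of the necessity argument rests on convergence of this integral and on the vanishing of the boundary term, both of which require $e^{F s}\to 0$, i.e. that $F$ is Hurwitz (asymptotically stable). This is precisely where the statement is delicate for the oscillator of interest: here $F = \left(\begin{smallmatrix} 0 & I \\ -A & 0\end{smallmatrix}\right)$ is only marginally stable, its eigenvalues sit on the imaginary axis, and the integral diverges. I therefore expect the real difficulty to lie in the marginally-stable regime. There one cannot solve the equation for \emph{every} $Q \succeq 0$ --- a positive-definite $Q$ would force asymptotic stability --- so the correct reading is that a suitable $Q \succeq 0$ exists; for the conservative oscillator the extreme choice $Q = 0$ already recovers the conserved energy $V(y) = \tfrac12\big(v^\top v + u^\top A u\big)$ with $\dot V = 0$. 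To handle this cleanly I would either confine the necessity proof to the asymptotically-stable case (as in Khalil, Thm.~4.6, taking $Q \succ 0$), or, for the oscillator specifically, construct $P$ from the conserved quadratic invariants on the eigenspaces of $F$, sidestepping the divergent integral altogether.
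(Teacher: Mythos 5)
The paper does not actually prove this proposition --- it defers to Khalil, Thm.~4.6 --- so the real question is whether your argument supports the way the proposition is used downstream, and it does. Your sufficiency computation $\dot V = y^\top(F^\top P + PF)y = -y^\top Q y$ is exactly the identity the paper exploits (with $Q=0$) to extract the conserved energy of the oscillator, and your insistence that positive (semi)definiteness of $P$ must be checked separately is precisely what the paper does afterwards via the Schur complement of $P$. Your diagnosis of the necessity direction is also correct, and it exposes a genuine imprecision in the statement itself: the ``for all positive semidefinite $Q$'' quantifier cannot hold for systems that are Lyapunov stable but not asymptotically stable, which is exactly the regime of interest here. Concretely, for the planar rotation $F=\left(\begin{smallmatrix}0&1\\-1&0\end{smallmatrix}\right)$ and any symmetric $P$ one has $\operatorname{tr}(PF+F^\top P)=2\operatorname{tr}(PF)=0$ because $F$ is skew-symmetric and $P$ is symmetric, so $PF+F^\top P=-I$ has no solution even though the system is Lyapunov stable; and your integral construction $P=\int_0^\infty e^{F^\top s}Q\,e^{Fs}\,\d s$ indeed requires $F$ Hurwitz and cannot be repaired in the marginal case. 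The honest reading of the proposition --- and the only part the paper ever uses --- is the sufficiency direction together with the existence of \emph{some} $Q\succeq 0$ (here $Q=0$) and a $P\succeq 0$ solving the equation. Your proposal matches this and makes the necessary caveat explicit, so there is no gap on your side; the gap, such as it is, lies in the statement's quantifier.
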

    Since we know that a pendulum is only marginally stable (i.e., not asymptotically stable), we can limit ourselves to the choice of a null matrix $Q$. Hence, we need to solve the Lyapunov equation $PF = -F^TP$ for $P$. A solution to this equation~(many exist) is
    $P = \begin{pmatrix}
    A & 0 \\ 0& I
    \end{pmatrix}$, which implies that
    \begin{equation}
        V(u) = \langle u,A u\rangle + \|v\|^2
        \label{eq:energy_continuous}
    \end{equation} is a quadratic invariant, i.e. $\dot V(u)=0$. This is well known, since $V$ is actually twice the total energy (Hamiltonian) of the pendulum. Finally, we can change variables and get that
    \begin{equation}
        V(q) = t^2\langle q,A q\rangle + \left\|\frac{\d}{\d t}(tq)\right\|^2 = 2t^2 f(q) + \|t\dot q + (q-q^*)\|^2,
        \label{eq:ly_quad_cont}
    \end{equation}
    is a Lyapunov function for \texttt{AGD-ODE}2, with $\dot V(q)=0$. This is precisely equation~\ref{eq:ly_su_boyd_quadratics}.

    \paragraph{From quadratic to convex.} With a small modification (using a factor $r-1$ instead of $1$), it is possible to get a Lyapunov function that works for \texttt{AGD-ODE} in the more general convex case.
    
    \begin{proposition}[Theorem 3 from~\cite{su2016}] For convex $L$-smooth objectives, \texttt{AGD-ODE} converges at a rate $O(1/t^2)$. This follows from the fact that 
    \begin{equation}
        V(q,t) = 2t^2 (f(q)-f^*) +\|t\dot q + (r-1)(q-q^*)\|^2.
        \label{eq:lyapunov_su_general}
    \end{equation}
    is a Lyapunov function, for $r\ge3$.
    \end{proposition}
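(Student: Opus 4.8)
The plan is to verify directly that the proposed $V$ is non-increasing along solutions of \texttt{AGD-ODE} and then read off the rate from the pointwise inequality $V(q,t) \ge 2t^2(f(q)-f^*)$. Following the excerpt I would first normalize $q^*=0$ and $f^*=0$ without loss of generality, and abbreviate the momentum-like vector by $w := t\dot q + (r-1)q$, so that $V = 2t^2 f(q) + \|w\|^2$. I would then differentiate the two pieces separately. The potential term gives $\frac{\d}{\d t}\bigl[2t^2 f(q)\bigr] = 4t f(q) + 2t^2\langle \nabla f(q),\dot q\rangle$ by the chain rule. For the kinetic term the key computation is $\dot w = \dot q + t\ddot q + (r-1)\dot q = t\ddot q + r\dot q$; rewriting \texttt{AGD-ODE} in the form $t\ddot q + r\dot q = -t\nabla f(q)$ (multiply through by $t$) collapses this to the clean identity $\dot w = -t\nabla f(q)$, whence $\frac{\d}{\d t}\|w\|^2 = 2\langle w, \dot w\rangle = -2t^2\langle \dot q, \nabla f(q)\rangle - 2(r-1)t\langle q,\nabla f(q)\rangle$.

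The crucial structural observation is that the two cross terms $\pm 2t^2\langle \nabla f(q),\dot q\rangle$ cancel when the pieces are summed, leaving $\dot V = 4t f(q) - 2(r-1)t\langle \nabla f(q),q\rangle$ with no remaining dependence on $\dot q$. This cancellation is exactly the point at which the exact energy invariance $\dot V = 0$ of the quadratic case (see~\eqref{eq:ly_quad_cont}) must relax into a one-sided estimate. To close it I would invoke convexity in the form $\langle \nabla f(q), q - q^*\rangle \ge f(q) - f^*$ (the subgradient inequality evaluated at the minimizer), which under our normalization reads $\langle \nabla f(q), q\rangle \ge f(q)$. Substituting yields $\dot V \le \bigl(4 - 2(r-1)\bigr)\, t f(q) = (6-2r)\, t f(q)$, and since $t \ge 0$ and $f(q) \ge f^* = 0$, the prefactor is nonpositive precisely when $r \ge 3$. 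This is where the hypothesis $r\ge 3$ is consumed, and it also explains why the coefficient $r-1$ (rather than the $1$ of the pure-quadratic invariant) is needed: it supplies the surplus that convexity turns into a sign.

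Finally, with $\dot V \le 0$ the function $t\mapsto V(q(t),t)$ is non-increasing, so $V(q(t),t) \le V(q(t_0),t_0) =: C_0$ for all $t \ge t_0$; dropping the nonnegative kinetic term gives $2t^2\bigl(f(q(t)) - f^*\bigr) \le C_0$, i.e. $f(q(t)) - f^* \le C_0/(2t^2) = O(1/t^2)$, which is the claimed rate.

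The computation itself is short, and I expect the only genuine subtlety to be bookkeeping at the singular point $t=0$, where the $r/t$ damping is undefined. I would handle this by running the Lyapunov argument from an arbitrary $t_0 > 0$ and appealing to the existence and uniqueness of solutions on $(0,\infty)$ established in~\cite{su2016}, or equivalently by letting $t_0\to 0^+$ after checking that $V(q(t),t)\to \|(r-1)(q_0-q^*)\|^2$ stays finite. I would also note that $L$-smoothness enters the argument only to guarantee well-posedness of the flow: the Lyapunov estimate $\dot V\le 0$ uses nothing beyond convexity and differentiability of $f$, so the whole mechanism of the proof is the cross-term cancellation followed by a single application of the subgradient inequality.
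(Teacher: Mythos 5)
Your computation is correct and is essentially the argument the paper defers to: the paper states this proposition without proof, citing Theorem~3 of~\cite{su2016}, and your derivation (differentiate $V$, use the ODE to get $\dot w=-t\nabla f(q)$, cancel the $\langle\nabla f(q),\dot q\rangle$ cross terms, and close with convexity to obtain $\dot V\le(6-2r)\,t\,(f(q)-f^*)\le0$ for $r\ge3$) reproduces that standard proof. Your remarks that $L$-smoothness is only needed for well-posedness and that the argument should be run from $t_0>0$ are also consistent with the treatment in~\cite{su2016}.
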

    
\subsection{Discrete-time invariants}
\label{sec:quadratic_invariants_disc}

We apply the construction from the last subsection to the discrete case. Inspired by Flammarion and Bach~\cite{flammarion2015averaging}, we consider at first a slightly modified \texttt{HB}:
\begin{equation*}
\tag{\texttt{HB}$2$}
    q_{k+1} = q_k + \frac{k-1}{k+1} (q_k-q_{k-1}) - h^2 \frac{k}{k+1} \nabla f(q_k).
\end{equation*}

This algorithm is the discrete-time equivalent of $\ddot q + \frac{2}{t} \dot q +\nabla f(q) = 0$. As for the continuous-time case, we start from $f(q) = \frac{1}{2}\langle q,A q \rangle$. In this case, \texttt{HB}$2$ can be written as
$$(k+1) q_{k+1} = 2 k q_k + (k-1) q_{k-1} - h^2 A (kq_k).$$
That is, if we set $u_k = kq_k$, we get
\begin{equation}
    u_{k+1} - 2u_k + u_{k-1} = -h^2 A u_k.
    \label{eq:stormer-verlet}
\end{equation}
With surprise, we recognize that this is the St{\"o}rmer--Verlet method~\cite{hairer2003geometric} on $\ddot u = -A u$, with step-size $h$ (that's why had $h^2$ from the very beginning).

It would be natural, as for the continuous-time case, to consider the total energy as a quadratic invariant to derive a Lyapunov function. However, it turns out that, interestingly, \textit{the St{\"o}rmer--Verlet method does not precisely conserve the total energy}: there are small oscillations~(see Section 3 in~\cite{hairer2014challenges})! Taking into account such small oscillations (Figure~\ref{fig:verlet}) is of fundamental importance --- since they lead to a crucial modification of the invariants we have to use. 

\begin{figure}[ht]
\centering
\includegraphics[width=0.9\textwidth]{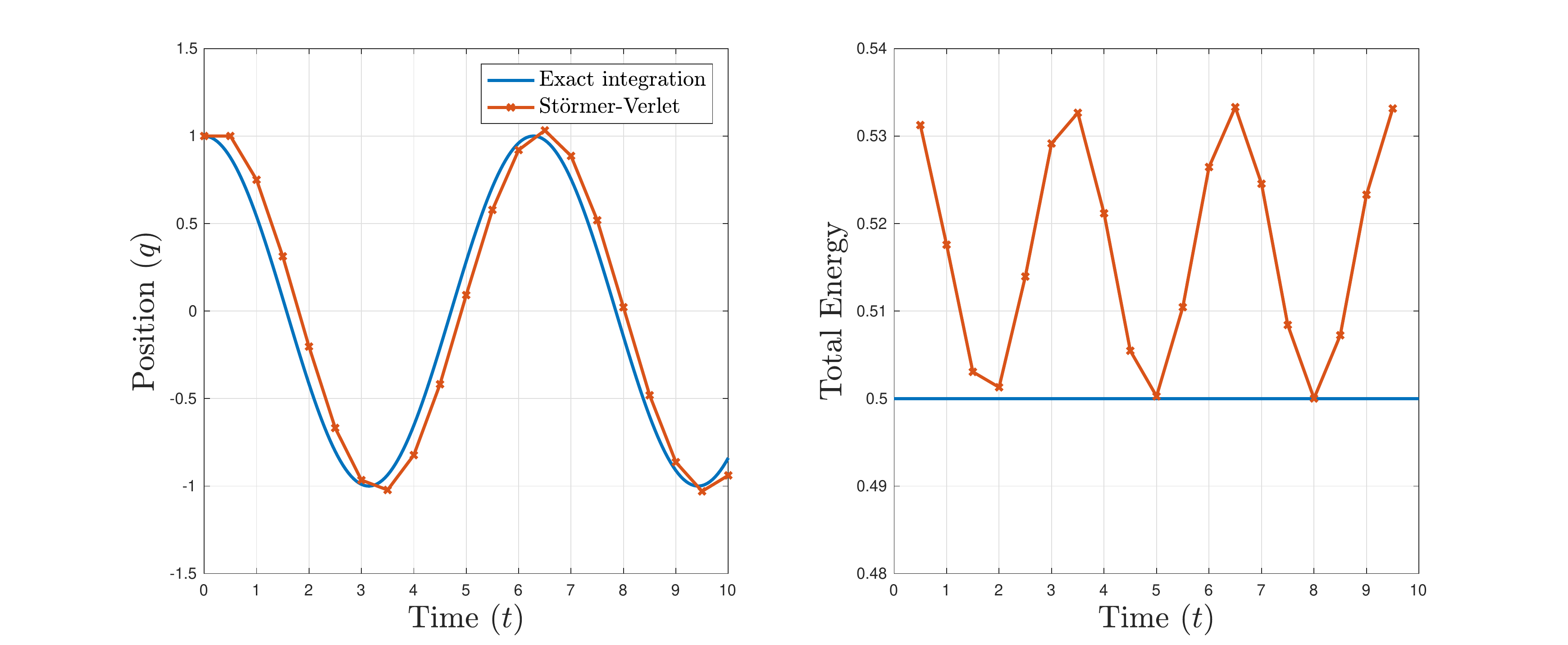}
\caption{The St{\"o}rmer--Verlet method on a one-dimensional quadratic potential (i.e., a simplified pendulum) does not conserve the total energy. Details on this phenomenon can be found in~\cite{hairer2016long,hairer2014challenges}.}
\label{fig:verlet}
\end{figure}

\begin{proposition}[Discrete-time Lyapunov equations]
    The system $ y_{k+1} = Fy_k$ is Lyapunov stable if an only if  for all positive semidefinite matrices $Q$, there exists a symmetric matrix $P$ such that
    \begin{equation}
        F^T P F - P = -Q.
        \label{eq:lyapunov_eq_discrete}
    \end{equation}
    Moreover, $V(y) = y^T P y$ is a Lyapunov function and $V (y_{k+1})-V(y_k) = -y_k^T Q y_k$ for all $k$.
    \end{proposition}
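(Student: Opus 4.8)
The plan is to separate the easy algebraic content, which is all that the rest of the paper actually invokes, from the full stability equivalence, which mirrors the continuous-time Proposition and the classical theorem of Khalil. I would prove the two parts independently and flag where the stated ``if and only if'' needs qualification.

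First I would dispatch the ``Moreover'' claim, since it is a one-line verification and the only part used downstream. Assuming $P$ is symmetric with $F^T P F - P = -Q$, I evaluate $V(y)=y^T P y$ along a trajectory $y_{k+1}=Fy_k$:
\begin{equation*}
V(y_{k+1})-V(y_k) = (Fy_k)^T P (Fy_k) - y_k^T P y_k = y_k^T\!\left(F^T P F - P\right) y_k = -\,y_k^T Q y_k,
\end{equation*}
which is exactly the stated difference identity. In particular the choice $Q=0$, used in the next subsection for St\"ormer--Verlet, gives $V(y_{k+1})=V(y_k)$, i.e.\ $V$ is a conserved quadratic invariant; this is the discrete analogue of $\dot V=0$ from Equation~\ref{eq:energy_continuous}.

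Then I would establish the equivalence by the standard telescoping-series and certificate argument. For ``solvable $\Rightarrow$ stable'' I take $P\succ 0$ and $Q\succ 0$: the identity above shows $V$ is positive and strictly decreasing along nonzero trajectories, so $\sum_k y_k^T Q y_k = V(y_0)-\lim_k V(y_k)<\infty$ forces $y_k^T Q y_k\to 0$, hence $y_k\to 0$; since this holds for every initial condition, $F^k\to 0$ and the spectral radius of $F$ lies below $1$. For the converse ``stable $\Rightarrow$ solvable'' I construct $P$ explicitly as
\begin{equation*}
P=\sum_{k=0}^{\infty}(F^T)^k\, Q\, F^k,
\end{equation*}
whose terms are each positive semidefinite and whose convergence follows from the geometric decay $\|F^k\|\le C\rho^k$ with $\rho<1$; a shift-and-subtract telescoping then yields $F^T P F - P = -Q$ directly.

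The main obstacle --- and the point I would be most careful about --- is that this clean equivalence holds for \emph{asymptotic} (Schur) stability together with \emph{definite} $Q$, not for mere Lyapunov (marginal) stability with all positive \emph{semi}definite $Q$ as literally stated. When $F$ has eigenvalues on the unit circle (exactly the oscillator and Verlet regime to which the Proposition is applied), the Lyapunov operator $\mathcal{L}(P)=F^T P F - P$ is singular: its eigenvalues are $\lambda_i\lambda_j-1$, which vanish whenever $\lambda_i\lambda_j=1$, e.g.\ for conjugate pairs $\lambda,\bar\lambda$ on the circle, or for $F=I$. Hence $F^T P F - P=-Q$ is solvable only for $Q$ in the range of $\mathcal{L}$, and the series above diverges. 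The honest reading is therefore that the ``$\forall Q$'' quantifier must be restricted to admissible $Q$ (equivalently, the full iff pertains to asymptotic stability), while the marginal case the paper needs is recovered by the special choice $Q=0$: then any positive definite solution $P$ of $F^T P F=P$ is a conserved, coercive quadratic, so $\|y_k\|^2\le V(y_0)/\lambda_{\min}(P)$ uniformly in $k$, and the bounded, invariant level sets give Lyapunov stability. I would state the Proposition in this corrected form and prove the $Q=0$ branch in detail, since that is precisely what Section~\ref{sec:quadratic_invariants_disc} uses.
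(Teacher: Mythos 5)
The paper offers no proof of this proposition at all: it is quoted as a classical fact (the discrete analogue of the continuous-time proposition attributed to Khalil, Thm.~4.6), and the only part the paper actually relies on downstream is the difference identity with $Q=0$, which yields the conserved quadratic for the St\"ormer--Verlet map. Your one-line verification of the ``Moreover'' clause, $V(y_{k+1})-V(y_k)=y_k^T(F^TPF-P)y_k=-y_k^TQy_k$, is exactly that piece and is correct; your telescoping argument for ``solvable $\Rightarrow$ stable'' and the series $P=\sum_{k\ge0}(F^T)^kQF^k$ for the converse are the standard proofs of the classical equivalence. More importantly, your critique of the statement itself is correct and worth recording: the ``if and only if'' with the quantifier over \emph{all} positive semidefinite $Q$ characterizes asymptotic (Schur) stability with definite $Q$, not Lyapunov (marginal) stability. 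For $F=I$, which is Lyapunov stable, the Stein operator $P\mapsto F^TPF-P$ vanishes identically, so the equation is solvable only for $Q=0$; more generally its eigenvalues $\lambda_i\lambda_j-1$ vanish exactly in the unit-circle regime to which the paper applies the proposition, so the series construction diverges there. Your repair --- prove the full equivalence for asymptotic stability, and recover the marginal case via the $Q=0$ branch with a positive definite invariant $P$ giving uniformly bounded level sets --- is the honest reading and matches how the paper actually uses the result. One small addition you might make explicit: the statement only asks for a \emph{symmetric} $P$, but a stability certificate (and your forward direction) requires $P$ positive semidefinite, which is precisely why the paper must separately verify, via the Schur complement condition on the step-size, that its explicit $P$ is admissible.
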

We apply the theorem above~(for $Q=0$) to the linear system
    $$\begin{pmatrix}
    u_{k+1}\\ v_{k+1}
    \end{pmatrix} = \begin{pmatrix}
    I-h^2 A & h I \\ -hA& I
    \end{pmatrix}\begin{pmatrix}
    u_k\\ v_k
    \end{pmatrix}.$$
Under the choice $v_k = (u_{k}-u_{k-1})/h$, this system is equivalent to equation~\ref{eq:stormer-verlet}, i.e., the discretized pendulum we want to find a quadratic invariant for. Solving the discrete Lyapunov equation gives us 
$$P = \begin{pmatrix}
    A & -hA/2 \\ -hA/2& I
    \end{pmatrix},$$
and the associated modified total energy:
\begin{equation}
V(u_k,v_k) = \underbrace{\langle u_k, A u_k\rangle +\|v_k\|^2}_{\text{continuous-time invariant (energy)}}- \underbrace{h\langle v_k, A u_k\rangle}_{\text{vanishing cross-term}}. 
    \label{eq:invariant_stormer}
\end{equation}
We make the following comments:
\begin{itemize}[leftmargin=0.48cm]
    \item As $h\to 0$, the modified energy approaches the total energy in equation~\ref{eq:energy_continuous}. The purpose of the additional cross-term is to eliminate the small energy oscillations we see in Figure~\ref{fig:verlet}.
    \item Assuming without loss of generality that $A$ does not have zero eigenvalues, $P$ is positive semidefinite (i.e., yields a valid Lyapunov function) if and only if the Schur complement of $I$ (i.e., the block $P_{22}$) in $P$ is positive semidefinite. That is, we need 
    \begin{equation}
        B:=A-\frac{h^2}{4}A^2 = A\left(I-\frac{h^2}{4}A\right)\ge0.
        \label{eq:B_matrix}
    \end{equation}
    Since $A$ and $(I-h^2A/4)$ are co-diagonalizable, the product is positive semidefinite if and only if both $A\ge 0$ and $(I-h^2A/4)\ge0$. This requires $0\le A\le \frac{4}{h^2}I$, which in turns implies an upper bound on the step-size $h^2$:
     $$h^2\le\frac{4}{\lambda_{\text{max}}(A)} = \frac{4}{L}.$$
     The same condition~(note that our step-size is $h^2$, not $h$) can be deduced from the analysis of \texttt{HB}$2$ in~\cite{flammarion2015averaging}.
\end{itemize}
Now it's time to change variables back: $u_k = kq_k$. If we set $p_k := (q_k-q_{k-1})/h$, as also done in the introduction, we get
\begin{align*}
    hv_{k} &= h(u_{k}-u_{k-1})/h\\ &= k q_k - (k-1) q_{k-1}+ (k-1) q_{k}- (k-1) q_{k}\\
    &= (k-1)(q_k-q_{k-1})+q_k\\
    &= h(k-1)p_k+q_k.
    \end{align*}
 By substituting these formulas in equation~\ref{eq:invariant_stormer}, we get the following final form for an effective Lyapunov function for \texttt{HB}$2$ --- for the quadratic case:
 \begin{align*}
 &V_k=\langle u_k, A u_k\rangle + \|v_k\|^2-\langle h v_k, A u_k\rangle\\ \Longrightarrow \ & V_k = k^2\langle q_k,Aq_k\rangle + \frac{1}{h^2}\|h(k-1)p_k+q_k\|^2-k\langle h(k-1)p_k+q_k, A q_k\rangle.
 \end{align*}
 To better understand this Lyapunov function, we multiply everything by $h^2$ and get
 $$V_k = (kh)^2\langle q_k,Aq_k\rangle + \|h(k-1)p_k+q_k\|^2 - h^2k\langle h(k-1)p_k+q_k, A q_k\rangle.$$
 Recalling that the ``time'' variable $t$ is defined to be $t_k = hk$, this cost becomes
  $$V_k = t_k^2\langle q_k,Aq_k\rangle + \|t_{k-1}p_k+q_k\|^2 - h t_k\langle t_{k-1}p_k+q_k, A q_k\rangle.$$
This Lyapunov function can be easily generalized by noting that $\langle q_k,Aq_k\rangle = 2(f(q_k)-f^*)$ and $A q_k = \nabla f(q_k)$:
\begin{equation}
    V_k = 2t_k^2 (f(q_k)-f^*) + \|t_{k-1}p_k+q_k\|^2 - t_k\langle\nabla f(q_k), t_{k-1}p_k+q_k\rangle.
    \label{eq:lyapunov_HB_first}
\end{equation}

Finally, note that 
\begin{itemize}[leftmargin=0.48cm]
    \item From equation~\ref{eq:lyapunov_HB_first} as $h\to0$ we get equation~\ref{eq:ly_quad_cont}: the continuous-time Lyapunov function of~\cite{su2016}.
\item  The mixing term is necessary and makes the positive definiteness~(see equation~\ref{eq:B_matrix}) of $V_k$ non-trivial.
\end{itemize} 

All in all, in this subsection, we proved the following result.
\begin{tcolorbox}
\begin{proposition}
    Let $A\in\R^{d\times d}$ be positive semidefinite, $f^*\in\R$ and $f(q)=  f^* + \frac{1}{2}(q-q^*)^T A (q-q^*)$. Let $(q_k)_{k\ge0}$ be the iterates of \texttt{HB}$2$ and $p_k:=(q_k-q_{k-1})/h$. If the step-size $h^2<\frac{4}{\lambda_\text{max}(A)}$, then equation~\ref{eq:lyapunov_HB_first} is non-negative and such that $V_{k+1}-V_k =0$ along the \texttt{HB}$2$ trajectory, for all $k$. From this, one can deduce an accelerated rate of $O(1/k^2)$ in suboptimality.
    \label{prop:proof_quadratic_bach}
\end{proposition}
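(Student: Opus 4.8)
The plan is to verify the three assertions in turn---invariance, non-negativity, and the rate---reusing the phase-space reformulation already assembled above. Throughout I take $f^*=0$ and $q^*=0$ as in the text, work with $u_k=kq_k$ and $v_k=(u_k-u_{k-1})/h$, and write $y_k=(u_k,v_k)$, so that by equation~\ref{eq:invariant_stormer} the (unscaled) Lyapunov function is the quadratic form $V_k=\langle u_k,Au_k\rangle+\|v_k\|^2-h\langle v_k,Au_k\rangle=y_k^TPy_k$ with $P$ as displayed. The passage back to the $(q_k,p_k)$ expression in equation~\ref{eq:lyapunov_HB_first} is exactly the change of variables already carried out in the text, so I treat the two forms as interchangeable.

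For the invariance $V_{k+1}-V_k=0$, I would first record that, under the choice $v_k=(u_k-u_{k-1})/h$, the St\"ormer--Verlet recursion~\ref{eq:stormer-verlet} is precisely the linear system $y_{k+1}=Fy_k$ with $F$ as displayed. The content of solving the discrete Lyapunov equation~\ref{eq:lyapunov_eq_discrete} with $Q=0$ is that the stated $P$ satisfies the finite matrix identity $F^TPF=P$; I would confirm this by a direct block multiplication, using that all blocks are polynomials in $A$ and hence commute (the $A^2$ and $A^3$ terms cancel). Granting it, invariance is immediate:
\[
V_{k+1}=y_{k+1}^TPy_{k+1}=y_k^T\left(F^TPF\right)y_k=y_k^TPy_k=V_k,
\]
which gives $V_{k+1}-V_k=0$ for every $k$.

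For non-negativity I would show $P\ge 0$. Since the $(2,2)$ block of $P$ is $I$, the Schur-complement criterion reduces $P\ge 0$ to positive semidefiniteness of $B:=A-\tfrac{h^2}{4}A^2=A(I-\tfrac{h^2}{4}A)$, exactly equation~\ref{eq:B_matrix}. As $A$ and $I-\tfrac{h^2}{4}A$ are co-diagonalizable, $B\ge 0$ holds iff $0\le A\le\tfrac{4}{h^2}I$, i.e. iff $h^2\le 4/\lambda_{\max}(A)$; under the strict hypothesis $h^2<4/\lambda_{\max}(A)$ this holds with room to spare. Hence $V_k=y_k^TPy_k\ge 0$, the claimed non-negativity of equation~\ref{eq:lyapunov_HB_first}.

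The rate is the only step that is not pure bookkeeping, and it is where I expect the real work. Invariance gives $V_k=V_0$ for all $k$, but the energy term $\langle u_k,Au_k\rangle=2k^2\,(f(q_k)-f^*)$ cannot simply be read off from $V_k$, because the cross-term $-h\langle v_k,Au_k\rangle$ is indefinite and may not be dropped. Instead I would quantitatively dominate the energy by the full form: diagonalizing $A$, for each eigenvalue $\lambda$ the scalar form $\lambda u^2+v^2-h\lambda uv$ dominates $\lambda u^2$ up to the factor $C_\lambda=(1-\tfrac{h^2\lambda}{4})^{-1}$, a one-line completing-the-square (equivalently, $2\times 2$ Schur) computation, and $C_\lambda$ is increasing in $\lambda$, hence maximized at $\lambda=\lambda_{\max}(A)$. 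Summing over eigenmodes (each mode's contribution to $V_k$ being non-negative by $P\ge 0$) yields $\langle u_k,Au_k\rangle\le C\,V_k$ with $C=(1-\tfrac{h^2}{4}\lambda_{\max}(A))^{-1}$, which is finite precisely because $h^2<4/\lambda_{\max}(A)$ is strict. Combining with $V_k=V_0$ and $\langle u_k,Au_k\rangle=2k^2\,(f(q_k)-f^*)$ gives
\[
f(q_k)-f^*\le\frac{C\,V_0}{2k^2}=O(1/k^2),
\]
the advertised accelerated rate. The main obstacle is thus this domination constant $C$: it is finite only under the strict step-size bound, and isolating it is what explains why the St\"ormer--Verlet cross-term, rather than the bare continuous-time energy, is the correct invariant to track.
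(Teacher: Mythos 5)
Your proof is correct, and it differs from the paper's in packaging rather than in substance. The paper does not prove Proposition~\ref{prop:proof_quadratic_bach} in isolation: it routes the proof through Theorem~\ref{thm:proof_quadratic} with $r=2$, establishing $V_{k+1}-V_k=0$ by the direct telescoping computations of Lemmas~\ref{lemma:V2_quad} and~\ref{lemma:V1_quad} in the $(q_k,q_{k-1})$ variables (the cross terms $\langle q_k-q^*,A(q_k-q_{k-1})\rangle$ cancel, and the remaining terms carry a factor $r-2=0$). You instead prove invariance by verifying the finite matrix identity $F^TPF=P$ from the discrete Lyapunov equation~\ref{eq:lyapunov_eq_discrete} --- which the paper uses only as the \emph{derivation} of $P$ in Section~\ref{sec:quadratic_invariants_disc} --- and this identity does check out by block multiplication, so your shortcut is legitimate and arguably cleaner for the $r=2$ case, though it does not generalize to $r>2$ where exact invariance fails. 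Your non-negativity argument via the Schur complement $B=A-\tfrac{h^2}{4}A^2\ge0$ is exactly equation~\ref{eq:B_matrix}. For the rate, the paper adds and subtracts $c\,h^2(k+r-2)^2\langle q_k-q^*,A(q_k-q^*)\rangle$ and checks positive semidefiniteness of a $2\times2$ block matrix $\tilde P$, obtaining the condition $h^2\le 4(1-c)/\lambda_{\max}(A)$ and the constant $1/(2ch^2(k+r-2)^2)$; your per-eigenmode domination constant $C=(1-\tfrac{h^2}{4}\lambda_{\max}(A))^{-1}$ is precisely $1/c$ at the optimal choice $c=1-\tfrac{h^2}{4}\lambda_{\max}(A)$, so the two arguments are the same completing-the-square computation, yours organized eigenvalue by eigenvalue (valid since $P$ block-diagonalizes in the eigenbasis of $A$ and each mode's contribution is non-negative), the paper's done at the matrix level. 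Both correctly identify the essential point you emphasize at the end: the indefinite St\"ormer--Verlet cross-term can only be absorbed at the cost of a factor that is finite precisely when the step-size bound is strict.
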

\end{tcolorbox}
Details of the proof are given in the proof of Theorem~\ref{thm:proof_quadratic}~(more general).

\section{Accelerated Heavy-ball methods for convex quadratics} 
\label{sec:gener_quadratic}
In this section, we start to lift the discussion to the convex non-quadratic setting, by providing a generalization of \texttt{HB}2. Indeed, we know from the continuous-time analysis in~\cite{su2016} that $\ddot q + \frac{2}{t}\dot q + \nabla f(q)=0$ may not have an accelerated rate for functions which are convex but not necessarily quadratic. In this case, a rate of $O(1/t^2)$ only holds~\footnote{The case $0<r\le3$ was studied by Attouch et al.~\cite{attouch2019rate}: a convergence rate of $O(1/t^p)$ with $p<2r/3$ is shown in this case. The same result also holds in discrete time.} for $$\ddot q + \frac{r}{t}\dot q + \nabla f(q)=0,$$ with $r\ge 3$. In the same way, we expect that \texttt{HB}$2$~(which is the discretization for $r=2$) may not have an accelerated rate in the convex non-quadratic setting and a \textit{generalization corresponding to high friction is therefore necessary}.

Our objective in this chapter is to construct such a generalization of \texttt{HB}$2$, which we name \texttt{HB}$r$. 

\subsection{A generalized Heavy-ball with high friction and guarantees on quadratics}
After a few weeks of intense calculations, we found that this algorithm gives the desired result~(Thm.~\ref{thm:proof_quadratic}).
\begin{equation*}
\tag{\texttt{HB}$r$}
    q_{k+1} = \underbrace{q_k + \frac{k-1}{k+r-1} (q_k-q_{k-1})}_{\text{iterate + momentum}} - h^2\underbrace{ \frac{k+\frac{r-2}{2}}{k+r-1} \nabla f(q_k)}_{\text{scaled gradient of iterate}}.
\end{equation*}
First, note that $r=2$ recovers \texttt{HB}$2$ --- which we proved to be accelerated in the last subsection using a novel Lyapunov argument. The second, and perhaps the most crucial, thing to note is that \texttt{HB}$r$ recalls the high friction generalization of \texttt{AGD} proposed by~\cite{su2016}~(see Theorem 6 in their paper):
\begin{equation*}
\tag{\texttt{AGD}$r$}
    q_{k+1} = \underbrace{q_k + \frac{k-1}{k+r-1} (q_k-q_{k-1})}_{\text{iterate + momentum}} - h^2  \underbrace{\nabla f\left(q_k + \frac{k-1}{k+r-1} (q_k-q_{k-1})\right)}_{\text{gradient of [iterate + momentum]}}.
\end{equation*}

Between \texttt{HB}$r$ and \texttt{AGD}$r$ there are a few important differences:
\begin{itemize}[leftmargin=0.48cm]
    \item In \texttt{AGD}$r$ the gradient is evaluated at $q_k + \frac{k-1}{k+r-1} (q_k-q_{k-1})$, while in \texttt{HB}$r$ it is evaluated at $q_k$. 
    \item in \texttt{HB}$r$ the effective step-size~(i.e. what multiplies the gradient) is iteration-dependent, and goes from $h^2/2$ to $h^2$ as $k\to\infty$. We believe this has \textit{not to be regarded as part of the acceleration mechanism}: it is just a small modification needed to make the analysis easier.
    \item Arguably \texttt{HB}$r$~(neglecting the small correction) is conceptually simpler that \texttt{AGD}$r$: compared to \texttt{GD}, only a momentum term is added at each iteration --- and this can be thought of as the source of acceleration.
\end{itemize}

We proceed in proving that \texttt{HB}$r$ is accelerated in the quadratic case.

\begin{tcolorbox}
\begin{theorem}Let $A\in\R^{d\times d}$ be positive semidefinite, $f^*\in\R$ and $f(q)=  f^* + \frac{1}{2}(q-q^*)^T A (q-q^*)$. Let $(q_k)_{k\ge0}$ be the iterates of \texttt{HB}$r$~($r\ge 2$) and $p_k:=(q_k-q_{k-1})/h$. If $h^2\le\frac{4}{\lambda_\text{max}(A)}$, then
    \begin{multline}
    V_k = 2(k+r-2)^2 h^2 (f(q_k)-f^*) + \|h(k-1)p_k+(r-1)(q_k-q^*)\|^2\\ - h^2 (k+r-2)\langle\nabla f(q_k), h(k-1)p_k+(r-1)(q_k-q^*)\rangle
    \label{eq:lyapunov_HB}
    \end{multline}
    is non-negative and such that $V_{k+1}-V_k \le0$ along the \texttt{HB}$r$ trajectory, for all $k$. Moreover, for any $h^2<\frac{4}{\lambda_\text{max}(A)}$, \texttt{HB}$r$ is accelerated. In particular, if $h^2=\frac{2}{\lambda_\text{max}(A)}$, we have the rate
    $$f(q_k)-f^*\le \frac{\lambda_\text{max}(A)V_0}{2(k+r-2)^2}.$$
    \label{thm:proof_quadratic}
\end{theorem}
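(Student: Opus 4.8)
The plan is to reduce the whole statement to a scalar recursion and then verify three algebraic facts: non-negativity of $V_k$, the monotonicity $V_{k+1}\le V_k$, and a lower bound on $V_k$ in terms of the suboptimality gap. First I would normalize $f^*=0$ and $q^*=0$ by translation, and then diagonalize $A=U\Lambda U^\top$. Because the \texttt{HB}$r$ update depends on $A$ only through $\nabla f(q_k)=Aq_k$, and because every term of $V_k$ is one of the quadratic forms $\langle q_k,Aq_k\rangle$, $\|\cdot\|^2$, $\langle\nabla f(q_k),\cdot\rangle$, the change of variables $\tilde q_k=U^\top q_k$ decouples \emph{both} the dynamics and $V_k$ eigenvalue by eigenvalue. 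It then suffices to treat the one-dimensional problem with $A=a\in[0,\lambda_{\max}(A)]$, writing $s:=h^2a\in[0,4]$, and to sum the contributions over eigenvalues at the end (the zero eigenvalues contribute a constant non-negative term and can be discarded as in the previous section).

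The key simplification is a change of variables inside $V_k$. Using $hp_k=q_k-q_{k-1}$ one checks that $h(k-1)p_k+(r-1)q_k=(k+r-2)q_k-(k-1)q_{k-1}=:w_k$, so in the scalar case $V_k=(k+r-2)^2 s\,q_k^2+w_k^2-(k+r-2)s\,q_kw_k$. Substituting the \texttt{HB}$r$ recursion into the definition $w_{k+1}=(k+r-1)q_{k+1}-kq_k$ produces the remarkably clean update $w_{k+1}=w_k-s\,m_k\,q_k$ with $m_k:=k+\tfrac{r-2}{2}$, together with $q_{k+1}=(w_{k+1}+kq_k)/(k+r-1)$. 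I expect the descent computation to be the one step that looks forbidding, and this is exactly where the author's ``weeks of calculations'' live; the trick that tames it is to express $V_{k+1}$ in the variables $(q_k,w_k)$. Expanding $V_{k+1}=(k+r-1)^2 s\,q_{k+1}^2+w_{k+1}^2-(k+r-1)s\,q_{k+1}w_{k+1}$ and using $(k+r-1)q_{k+1}-w_{k+1}=kq_k$, the $w_k^2$ term and the cross term cancel exactly against those in $V_k$, leaving the single term $V_{k+1}-V_k=\tfrac{s(r-2)m_k}{2}(s-4)\,q_k^2$. Since $s\in[0,4]$, $r\ge2$ and $m_k>0$, this is $\le0$ (and $=0$ when $r=2$, recovering the exact invariant of Proposition \ref{prop:proof_quadratic_bach}).

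It remains to turn $V_k\le V_0$ into a rate. Non-negativity of $V_k$ follows from viewing it as a quadratic form in $(q_k,w_k)$: its determinant is $(k+r-2)^2 s\,(1-s/4)$, which is $\ge0$ exactly when $s\le4$, i.e. $h^2\le4/\lambda_{\max}(A)$, matching equation \ref{eq:B_matrix}. For the rate I would lower-bound $V_k$ by a multiple of its leading term: the form $(1-\rho)(k+r-2)^2 s\,q_k^2+w_k^2-(k+r-2)s\,q_kw_k$ is positive semidefinite iff $\rho\le1-s/4$, so $V_k\ge(1-s/4)(k+r-2)^2 s\,q_k^2$ in each eigendirection. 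Summing over eigenvalues and bounding each factor $1/(1-s_i/4)$ by its worst case $1/(1-h^2\lambda_{\max}(A)/4)$ yields $f(q_k)-f^*\le V_0/\big(2h^2(k+r-2)^2(1-h^2\lambda_{\max}(A)/4)\big)$, an $O(1/k^2)$ rate for every $h^2<4/\lambda_{\max}(A)$. Specializing to $h^2=2/\lambda_{\max}(A)$ forces $s_i\le2$, hence $1-s_i/4\ge\tfrac12$, and gives the clean constant $f(q_k)-f^*\le\lambda_{\max}(A)V_0/\big(2(k+r-2)^2\big)$.

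The only genuinely delicate point, as flagged above, is the descent identity; I am fairly confident it collapses as described precisely because of the linear update $w_{k+1}=w_k-s m_k q_k$, which is the discrete analogue of the St\"ormer--Verlet bookkeeping used for \texttt{HB}$2$. Everything else is a routine (if tedious) verification of two positive-semidefiniteness conditions, both governed by the single scalar inequality $s\le4$.
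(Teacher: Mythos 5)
Your proposal is correct: I checked the key identity and your claimed one-step decrement $V_{k+1}-V_k=\tfrac{s(r-2)m_k}{2}(s-4)\,q_k^2$ per eigendirection agrees exactly with the paper's $-h^2(r-2)(2k+r-2)\langle q_k-q^*,B(q_k-q^*)\rangle$ with $B=A-\tfrac{h^2}{4}A^2$, and your positivity and rate conditions reduce to the same determinant inequality $s\le 4(1-\rho)$ that the paper obtains as a Schur complement. The underlying mechanism is the same as the paper's --- your $w_k$ is the paper's $g_k$, and your update $w_{k+1}=w_k-s\,m_k q_k$ is precisely its computation of $g_{k+1}-g_k=-h^2(k+\tfrac{r-2}{2})\nabla f(q_k)$ --- but the organization genuinely differs. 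The paper stays coordinate-free, splits $V_k=V_k^1+V_k^2$, computes the two differences in separate lemmata, and observes that the sign-indefinite terms $\langle q_k-q^*,A(q_k-q_{k-1})\rangle$ cancel only when the two pieces are summed; you instead diagonalize $A$, pass to a scalar two-variable recursion in $(q_k,w_k)$, and expand $V_{k+1}$ directly in those coordinates, so the cancellation of the $w_k^2$ and $q_kw_k$ terms happens inside a single computation. Your route is shorter and makes the cancellation structurally transparent (it is forced by $(k+r-1)q_{k+1}-w_{k+1}=kq_k$ and $k-2m_k=-(k+r-2)$), and your per-eigenvalue lower bound $V_k\ge(1-s_i/4)(k+r-2)^2s_iq_{k,i}^2$ is marginally sharper than the paper's uniform choice $c=\tfrac12$; the cost is that the diagonalization step is specific to quadratics, whereas the paper's Lemma on $V_k^2$ is stated for arbitrary differentiable $f$ and is the piece intended to survive a future extension beyond the quadratic case. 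Both yield the identical constants in the stated rate.
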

\end{tcolorbox}

We note a couple of facts about the Lyapunov function $V_k$ in equation~\ref{eq:lyapunov_HB}.
\begin{itemize}[leftmargin=0.48cm]
    \item It reduces to equation~\ref{eq:lyapunov_HB_first} in the case $r=2$. For $r>2$, it is a graspable generalization of equation~\ref{eq:lyapunov_HB_first} --- which we instead derived in a systematic way using Lyapunov equations. The term $(r-1)$ is inspired by the continuous-time limit in equation~\ref{eq:lyapunov_su_general}.
    \item Consider the Lyapunov function above, but without cross term i.e. 
    $$2(k+r-2)^2 h^2 (f(q_k)-f^*) + \|h(k-1)p_k+(r-1)(q_k-q^*)\|^2.$$
    This function works for proving an $O(1/k^2)$ rate for \texttt{AGDr} (it's a Lyapunov function, see Thm. 6 from~\cite{su2016}). Therefore, higher complexity~(i.e., an additional cross term) is needed to study the acceleration of Heavy-ball, when compared to Nesterov's method. 
    \item As $h\to0$, the cross term vanishes $V_k$ converges to equation~\ref{eq:lyapunov_su_general} --- its continuous-time equivalent. Indeed, both \texttt{HBr} and \texttt{AGDr} converge to \texttt{AGD-ODE} as $h\to 0$.
\end{itemize}

\subsection{Proof of the theorem}
It is useful to simplify equation~\ref{eq:lyapunov_HB} and to work with variables $q_k$ and $q_{k-1}$ --- a natural choice in the discrete setting. We split the Lyapunov function into two parts: $V_k = V^1_k + V^2_k$.
\begin{align}
    &V^1_k := 2(k+r-2)^2 h^2 (f(q_k)-f^*)\label{eq:V1}\\ &\quad \quad\quad - h^2 (k+r-2)\langle\nabla f(q_k),(k-1)(q_k-q_{k-1})+(r-1)(q_{k}-q^*)\rangle.\nonumber\\
    & V^2_k := \|(k-1)(q_k-q_{k-1})+(r-1)(q_{k}-q^*)\|^2.\label{eq:V2}
\end{align}

First, we are going to study $V_k^2$ in the non-quadratic case, and then $V_k^1$ in the quadratic case. Theorem~\ref{thm:proof_quadratic} will follow from a combination of the two corresponding lemmata. 

The first lemma shares many similarities with the proof of Theorem 1 in~\cite{ghadimi2015global}.
\begin{lemma}
For any differentiable function $f:\R^d\to\R$ (not necessarily convex or $L$-smooth) and any sequence of iterates $(q_k)_{k\ge0}$ returned by \texttt{HB}$r$, we have:
\begin{align*}
    V_{k+1}^2-V_k^2\ = \ &-h^2(r-1)(2k+r-2)\langle\nabla f(q_k),q_k-q^*\rangle\\
    &-h^2(k-1)(2k+r-2)\langle\nabla f(q_k),q_{k}-q_{k-1}\rangle\\
    &+\frac{h^4}{4}(2k+r-2)^2\|\nabla f(q_k)\|^2,
\end{align*}
where $V^2_k$ is defined in equation~\ref{eq:V2}.
\label{lemma:V2_quad}
\end{lemma}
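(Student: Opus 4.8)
The plan is to notice that the vector inside the norm defining $V_k^2$ obeys a simple affine one-step recursion, so that the entire lemma reduces to expanding a single squared norm. First I would abbreviate
\[
w_k := (k-1)(q_k-q_{k-1})+(r-1)(q_k-q^*),
\]
so that $V_k^2=\|w_k\|^2$ by \eqref{eq:V2}, and the task becomes computing $\|w_{k+1}\|^2-\|w_k\|^2$. Writing $w_{k+1}=k(q_{k+1}-q_k)+(r-1)(q_{k+1}-q^*)$ and splitting $q_{k+1}-q^*=(q_{k+1}-q_k)+(q_k-q^*)$, the terms collect into
\[
w_{k+1}=(k+r-1)(q_{k+1}-q_k)+(r-1)(q_k-q^*).
\]

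The crucial step is to feed the \texttt{HB}$r$ update into the first term. Multiplying the recursion through by $(k+r-1)$ and using $k+\frac{r-2}{2}=\frac{2k+r-2}{2}$ gives
\[
(k+r-1)(q_{k+1}-q_k)=(k-1)(q_k-q_{k-1})-\frac{h^2(2k+r-2)}{2}\nabla f(q_k).
\]
Substituting this and recognizing that $(k-1)(q_k-q_{k-1})+(r-1)(q_k-q^*)$ is exactly $w_k$, everything collapses to the clean recursion
\[
w_{k+1}=w_k-\frac{h^2(2k+r-2)}{2}\nabla f(q_k).
\]
I expect this algebraic bookkeeping --- seeing that the momentum coefficient $\frac{k-1}{k+r-1}$ and the gradient coefficient $\frac{k+(r-2)/2}{k+r-1}$ conspire to reproduce the $w_k$ structure --- to be the only real obstacle; everything afterward is mechanical.

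With the identity $w_{k+1}=w_k-a_k\nabla f(q_k)$ in hand, where $a_k:=\frac{h^2(2k+r-2)}{2}$, I would simply expand
\[
V_{k+1}^2-V_k^2=\|w_k-a_k\nabla f(q_k)\|^2-\|w_k\|^2=-2a_k\langle w_k,\nabla f(q_k)\rangle+a_k^2\|\nabla f(q_k)\|^2,
\]
then substitute $2a_k=h^2(2k+r-2)$ and $a_k^2=\frac{h^4}{4}(2k+r-2)^2$, and split the inner product $\langle w_k,\nabla f(q_k)\rangle$ along its $(r-1)(q_k-q^*)$ and $(k-1)(q_k-q_{k-1})$ components. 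This reproduces the three displayed terms verbatim. Note that the argument uses only the update rule and never any property of $f$ (neither convexity nor smoothness), which is exactly why the statement is asserted for arbitrary differentiable $f$.
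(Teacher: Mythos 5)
Your proposal is correct and is essentially the paper's own argument: the paper also sets $g_k=(k-1)(q_k-q_{k-1})+(r-1)(q_k-q^*)$ and uses the \texttt{HB}$r$ update to find $g_{k+1}-g_k=-h^2\bigl(k+\tfrac{r-2}{2}\bigr)\nabla f(q_k)$, which is exactly your one-step recursion $w_{k+1}=w_k-a_k\nabla f(q_k)$. The only cosmetic difference is that the paper expands $\|g_{k+1}\|^2-\|g_k\|^2$ via the factorization $\langle g_{k+1}+g_k,\,g_{k+1}-g_k\rangle$ rather than by expanding the square directly; the two computations are identical.
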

\begin{proof}
Let $g_{k} := (k-1)(q_k-q_{k-1})+(r-1)(q_{k}-q^*)$, then, 
$$V^2_{k+1}-V^2_k = \|g_{k+1}\|^2-\|g_{k}\|^2 = \langle g_{k+1}+g_{k},g_{k+1}-g_k\rangle.$$
We proceed in computing $g_{k+1}-g_k$. The algorithm symmetric structure here is fundamental:
\begin{align*}
g_{k+1}-g_k = \ & k(q_{k+1}-q_{k})+(r-1)(q_{k+1}-q^*)-(k-1)(q_k-q_{k-1})-(r-1)(q_{k}-q^*)\\ 
= \ & (k+r-1)q_{k+1}-(k+r-1)q_k-(k-1)(q_k-q_{k-1})\\ 
\overset{\text{(\texttt{HB}$r$)}}{=} & - h^2\left(k+\frac{r-2}{2}\right) \nabla f(q_k).
\end{align*}
Instead, $g_{k+1}+g_k$ is slightly more complex.
\begin{align*}
g_{k+1}+g_k = \ & k(q_{k+1}-q_{k})+(r-1)(q_{k+1}-q^*)+(k-1)(q_k-q_{k-1})+(r-1)(q_{k}-q^*)\\ 
= \ & (k+r-1)q_{k+1}+(-k+r-1)q_k+(k-1)(q_k-q_{k-1})-2(r-1)q^*\\ 
\overset{\text{(\texttt{HB}$r$)}}{=} &(k+r-1)q_{k} + (k-1)(q_k-q_{k-1})- h^2\left(k+\frac{r-2}{2}\right) \nabla f(q_k)\\&+(-k+r-1)q_k+(k-1)(q_k-q_{k-1})-2(r-1)q^*\\ 
= \ &2(r-1)(q_{k}-q^*)+2(k-1)(q_k-q_{k-1})- h^2\left(k+\frac{r-2}{2}\right) \nabla f(q_k).
\end{align*}
The proof is concluded by taking the inner product.
\end{proof}

We proceed by computing the difference $V^1_{k+1}-V^1_k$. Our calculations will be very quick, since we can leverage, in the quadratic case, on a simplified expression for $V^1_k$.

\begin{lemma}
 Let $V^1_k$ be defined as equation~\ref{eq:V1}. In the context of Theorem~\ref{thm:proof_quadratic}, we have
$$V^1_k = h^2(k+r-2)(k-1)\langle q_{k-1}-q^*,A(q_k-q^*)\rangle$$
and
\begin{align*}
    V_{k+1}^1-V_k^1\ = & \ h^2(2k+r-2)\langle q_k-q^*, A(q_k-q^*)\rangle\\
    &+h^2(k-1)(2k+r-2)\langle q_k-q^*, A(q_k-q_{k-1})\rangle\\
    &-\frac{h^4}{2} (2k+r-2)k\|A(q_k-q^*)\|^2.
\end{align*}
\vspace{-3mm}
\label{lemma:V1_quad}
\end{lemma}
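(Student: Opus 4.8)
The plan is a direct computation in two stages, one for each claim. Throughout I would write $w_k := q_k - q^*$ and use the quadratic identities $f(q_k)-f^* = \tfrac12\langle w_k, A w_k\rangle$ and $\nabla f(q_k) = A w_k$, together with the symmetry of $A$.

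\emph{Simplifying $V^1_k$.} First I would substitute the two quadratic identities into the definition in equation~\ref{eq:V1}. The suboptimality term becomes $(k+r-2)^2 h^2 \langle w_k, A w_k\rangle$, while the cross term involves the vector $g_k := (k-1)(q_k-q_{k-1})+(r-1)(q_k-q^*)$ already appearing in Lemma~\ref{lemma:V2_quad}. The key algebraic move is to rewrite $g_k$ purely in terms of $w_k$ and $w_{k-1}$: using $q_k - q_{k-1} = w_k - w_{k-1}$ one finds $g_k = (k+r-2)w_k - (k-1)w_{k-1}$. Plugging this in, the piece proportional to $\langle w_k, A w_k\rangle$ coming from the cross term \emph{exactly cancels} the suboptimality term, and all that survives is $h^2(k+r-2)(k-1)\langle w_{k-1}, A w_k\rangle$, which by symmetry of $A$ is the claimed simplified form. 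I expect this cancellation to be the crux of the lemma: it collapses a three-piece expression into a single bilinear term, and it is precisely what makes the subsequent difference computation short.

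\emph{Computing the difference.} Shifting the index $k \mapsto k+1$ in the simplified formula gives $V^1_{k+1} = h^2 k(k+r-1)\langle w_k, A w_{k+1}\rangle$. I would then eliminate $w_{k+1}$ using the \texttt{HB}$r$ recursion in the quadratic case,
$$w_{k+1} = w_k + \frac{k-1}{k+r-1}(w_k - w_{k-1}) - \frac{h^2(2k+r-2)}{2(k+r-1)}A w_k,$$
so that $\langle w_k, A w_{k+1}\rangle$ expands into the three inner products $\langle w_k, A w_k\rangle$, $\langle w_k, A(w_k-w_{k-1})\rangle$, and $\langle w_k, A^2 w_k\rangle = \|A w_k\|^2$. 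To subtract $V^1_k$ in the same basis, I would rewrite $w_{k-1} = w_k - (w_k-w_{k-1})$, so that the simplified $V^1_k$ reads $h^2(k+r-2)(k-1)\big[\langle w_k, A w_k\rangle - \langle w_k, A(w_k-w_{k-1})\rangle\big]$.

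Finally I would gather coefficients of the three basis quantities. The coefficient of $\langle w_k, A w_k\rangle$ is $k(k+r-1)-(k+r-2)(k-1) = 2k+r-2$; that of $\langle w_k, A(w_k-w_{k-1})\rangle$ is $k(k-1)+(k+r-2)(k-1) = (k-1)(2k+r-2)$; and that of $\|A w_k\|^2$ is $-\tfrac{h^2}{2}(2k+r-2)k$. Re-expressing $w_k = q_k-q^*$ and $w_k - w_{k-1} = q_k - q_{k-1}$ then yields exactly the three lines of the claim. The only genuine obstacle here is bookkeeping --- the polynomial arithmetic in $k$ and $r$ --- but since the hard simplification already took place in the first stage, each coefficient is a short and self-checking computation.
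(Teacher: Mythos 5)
Your proposal is correct and takes essentially the same route as the paper: the identical cancellation that collapses $V^1_k$ to the single bilinear term $h^2(k+r-2)(k-1)\langle q_{k-1}-q^*,A(q_k-q^*)\rangle$, followed by substituting the \texttt{HB}$r$ recursion for $q_{k+1}-q^*$ and collecting coefficients of the same three inner products (the paper merely packages this second step through an auxiliary vector $\Delta_k$ before taking the inner product with $A(q_k-q^*)$). All three coefficients you compute, $2k+r-2$, $(k-1)(2k+r-2)$, and $-\tfrac{h^2}{2}k(2k+r-2)$, agree with the paper's.
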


\begin{proof}
From equation~\ref{eq:V1}, we get
\begin{align*}
    V^1_k = \ & \ (k+r-2)^2 h^2 \langle q_k-q^*, A(q_k-q^*)\rangle\\ &- h^2 (k+r-2)\langle A(q_k-q^*),(k-1)(q_k-q_{k-1})+(r-1)(q_{k}-q^*)\rangle\\
    = \ & \ (k+r-2)^2 h^2 \langle q_k-q^*, A(q_k-q^*)\rangle\\
    &\ - h^2 (k+r-2)\langle A(q_k-q^*),(k+r-2)(q_k-q^*)-(k-1)(q_{k-1}-q^*)\rangle\\
    = \ & \ h^2(k+r-2)(k-1)\langle q_{k-1}-q^*,A(q_k-q^*)\rangle.
\end{align*}

We proceed computing $V^1_{k+1}-V^1_{k}$ using this simplified form:

\begin{align*}
    V^1_{k+1} - V^1_k = \ & \ h^2(k+r-1)k\langle q_{k}-q^*,A(q_{k+1}-q^*)\rangle\\&-h^2(k+r-2)(k-1)\langle q_{k-1}-q^*,A(q_k-q^*)\rangle\\
    = \ & h^2\langle q_{k}-q^*,A \Delta_k\rangle,
\end{align*}
where
\begin{align*}
    \Delta_k := \ & \ (k+r-1)k(q_{k+1}-q^*)-(k+r-2)(k-1)(q_{k-1}-q^*).
\end{align*}
Now, recall the definition of \texttt{HB}$r$:
$$(q_{k+1}-q^*) = (q_k-q^*) + \frac{k-1}{k+r-1} (q_k-q_{k-1}) - h^2 \frac{k+\frac{r-2}{2}}{k+r-1}A(q_k-q*),$$
where we subtracted $q^*$ from both sides. By plugging this into $\Delta_k$, we get
\begin{align*}
    \Delta_k = \ & \ (k+r-1)k(q_k-q^*) + (k-1)k (q_k-q_{k-1}) - \frac{h^2}{2} (2k+r-2)k A(q_k-q*)\\
    &-(k+r-2)(k-1)(q_{k-1}-q^*)\\
    =\ & \ (2k+r-2)k(q_k-q^*)-(2k+r-2)(k-1)(q_{k-1}-q^*)\\&- \frac{h^2}{2} (2k+r-2)k A(q_k-q^*)\\
    =\ & \ (2k+r-2)(q_k-q^*)+(2k+r-2)(k-1)(q_k-q_{k-1})- \frac{h^2}{2} (2k+r-2)k A(q_k-q^*).
\end{align*}
The result follows after taking the inner product $h^2\langle q_{k}-q^*,A \Delta_k\rangle$.
\end{proof}
We are finally ready to prove the result.
\begin{proof}[Proof of Theorem~\ref{thm:proof_quadratic}]
First, we compute $V_{k+1}^1-V_k = (V_{k+1}^1-V_k^1)+(V_{k+1}^2-V_k^2)$ using Lemma~\ref{lemma:V1_quad} and Lemma~\ref{lemma:V2_quad}~(written for quadratic $f$). Next, we show that a certain condition on the step-size implies positivity of $V_k$ and a convergence rate.
\begin{align*}
    (V_{k+1}^1-V_k^1)+(V_{k+1}^2-V_k^2)\ = & \ h^2(2k+r-2)\langle q_k-q^*, A(q_k-q^*)\rangle\\
    &\hcancel{+h^2(k-1)(2k+r-2)\langle q_k-q^*, A(q_k-q_{k-1})\rangle}\\
    &-\frac{h^4}{2} (2k+r-2)k\|A(q_k-q^*)\|^2\\ &-h^2(r-1)(2k+r-2)\langle q_k-q^*, A(q_k-q^*)\rangle\\
    &\hcancel{-h^2(k-1)(2k+r-2)\langle q_k-q^*,A(q_{k}-q_{k-1})\rangle}\\
    &+\frac{h^4}{4}(2k+r-2)^2\|A(q_k-q^*)\|^2.
\end{align*}
Crucially, note that the terms including $\langle q_k-q^*,A(q_{k}-q_{k-1})\rangle$ cancel. This is necessary to make our proof~(or, probably, any proof) work, since such inner product between the gradient and the momentum changes sign (infinitely) many times along the trajectory, and therefore cannot be easily compared to other quantities. For the same reason, in the corresponding continuous-time proof from~\cite{su2016}, the terms including $\langle \nabla f(q), p\rangle$ also perfectly cancel out.

All in all, by collecting some terms, we get
\begin{align*}
    V_{k+1}-V_{k} &= -h^2(r-2)(2k+r-2)\langle q_k-q^*, A(q_k-q^*)\rangle \\& \quad \quad+ \frac{h^4}{4}(2k+r-2)(r-2)\|A(q_k-q^*)\|^2\\
    &=-h^2(r-2)(2k+r-2)\langle q_k-q^*,B(q_k-q^*)\rangle,
\end{align*}
where $B:=A-\frac{h^2}{4}A^2$ is the matrix that we already studied in the context of Lyapunov equations~(see equation~\ref{eq:B_matrix}). Since $r\ge2$, a sufficient condition for $V_{k+1}-V_{k}\le 0$ is $B\ge0$, which holds under $h^2\le\frac{4}{\lambda_\text{max}(A)}$. As a sanity check, the reader can appreciate the fact that, if $r=2$, then $V_{k+1}=V_k$ --- as we already proved in Proposition~\ref{prop:proof_quadratic_bach}~(follows from the fact that $V_k$ solves the Lyapunov equations).

Last, we have to translate the fact that $V_k$ is non-increasing to a convergence rate. This is not trivial in our case, since $V_k$ also contains a cross term which is not necessarily positive. Actually, we do not even know that $V_k\ge0$ yet! Hence, we have to come up with some tricks. We start from rewriting the~(simplified) Lyapunov function:
$$V_k =
h^2(k+r-2)(k-1)\langle q_{k-1}-q^*,A(q_k-q^*)\rangle + \|(k-1)(q_k-q_{k-1})+(r-1)(q_{k}-q^*)\|^2.$$
Now, let us add and subtract a term $c h^2(k+r-2)^2\langle q_{k}-q^*,A(q_k-q^*)\rangle$, with $c>0$. We have:
$$V_k = c h^2(k+r-2)^2\langle q_{k}-q^*,A(q_k-q^*)\rangle + \tilde V_k,$$
with
\begin{align*}
    \tilde V_k :=& -c h^2(k+r-2)^2\langle q_{k}-q^*,A(q_k-q^*)\rangle \\&\quad + h^2(k+r-2)(k-1)\langle q_{k-1}-q^*,A(q_k-q^*)\rangle\\&\quad + \|(k-1)(q_k-q_{k-1})+(r-1)(q_{k}-q^*)\|^2.
\end{align*}
Now, if we show that $\tilde V_k$ is always positive, then $V_{k+1}\le V_k$ for all $k$ implies:
$$c h^2(k+r-2)^2\langle q_{k}-q^*,A(q_k-q^*)\rangle \le c h^2(k+r-2)^2\langle q_{k}-q^*,A(q_k-q^*)\rangle + \tilde V_k= V_k\le V_0,$$
which gives the desired rate:
$$ f(q_k)-f^*=\frac{1}{2}\langle q_{k}-q^*,A(q_k-q^*)\rangle \le \frac{V_0}{2c h^2(k+r-2)^2}.$$
Therefore, we only need to show $\tilde V_k\ge0$. To do this, we introduce two new variables:
\begin{equation*}
    u_{k} := (k+r-2)(q_k-q^*),\quad\quad w_{k} := (k-1)(q_{k-1}-q^*),
\end{equation*}
and get a simplified form for $\tilde V_k$
\begin{align*}
    \tilde V_k &= -c h^2 \langle u_k,Au_k\rangle + h^2\langle v_k,A w_k\rangle + \|u_k-w_k\|^2\\
    &=\langle u_k, (I-ch^2A)u_k\rangle + \|w_k\|^2 - 2\langle u_k,\left(I-\frac{h^2}{2}A\right)w_k\rangle.
\end{align*}
Hence, we just need to show that
$$\tilde P =\begin{pmatrix}
I-ch^2A & -\left(I-\frac{h^2}{2}A\right)\\
-\left(I-\frac{h^2}{2}A\right) & I
\end{pmatrix}$$
is positive definite, for some $c$ and $h^2$. Using the Schur characterization for positive semidefinite matrices, $\tilde P\ge0$ if and only if 
\begin{align*}
    0\le\tilde B(c) &:=I-ch^2A-\left(I-\frac{h^2}{2}A\right)^2\\
    &=I-ch^2A-I-\frac{h^4}{4}A^2+h^2A\\
    &=h^2 A\left(1-c-\frac{h^2}{4}A\right).
\end{align*}
It is clear that $\tilde B(c)$ is positive semidefinite if and only if
$1-c-\frac{h^2}{4}\lambda_{\text{max}}(A)\ge 0$. That is,
$$h^2\le\frac{4(1-c)}{\lambda_\text{max}(A)}.$$
Hence, for any $c\in (0,1)$ we get an acceleration. In particular, in the theorem, we chose $c=\frac{1}{2}$.
\end{proof}

\subsection{Numerical verification of our Lyapunov function} We verify numerically that the Lyapunov function for \texttt{HB}r proposed in equation~\ref{eq:lyapunov_HB} works on quadratics. To more clearly show the effect the inner product correction term, which originated from the quadratic invariant of the St{\"o}rmer--Verlet method, we use here a slightly different notation: $V_k = V^{11}_k+V^{12}_k+V^{2}_k$, with $V^{1}_k = V^{11}_k+V^{12}_k$.
\begin{align*}
    V^{11}_k &:= 2(k+r-2)^2 h^2 (f(q_k)-f^*);\\ 
    V^{12}_k&:= - h^2 (k+r-2)\langle\nabla f(q_k),(k-1)(q_k-q_{k-1})+(r-1)(q_{k}-q^*)\rangle;\\
    V^{2}_k &:= \|(k-1)(q_k-q_{k-1})+(r-1)(q_{k}-q^*)\|^2.
\end{align*}
We recall that, the term $V^{12}_k$ (a.k.a. the \textit{cross-term}) vanishes as $h\to0$, and is indeed not present in the continuous-time limit. We show that this term, which we derived using Lyapunov equations in Sec.~\ref{sec:quadratic_invariants}, plays a fundamental role in ensuring $V_{k+1}-V_k\le0$. In Figure~\ref{fig:lyapunov_HBr_linreg_1} we verify numerically Thm.~\ref{thm:proof_quadratic}. In Figure~\ref{fig:lyapunov_HBr_linreg_3} we show the essential role of $V^{12}$. Here we used $h^2=1/L$, but \texttt{HB}$r$ can take larger steps~(up to $4/L$), while the other algorithms become unstable~(Figure~\ref{fig:HBr_big_step}).

\clearpage

\begin{figure}[ht!]
\centering
\begin{subfigure}{\textwidth}\centering\includegraphics[width=0.8\columnwidth]{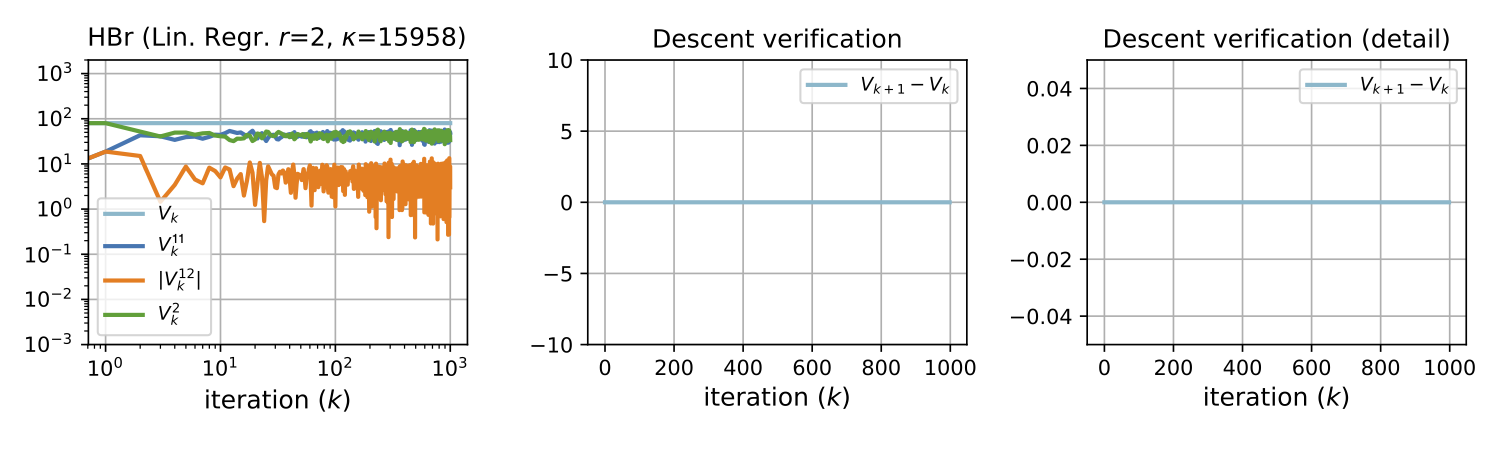}
\end{subfigure}
\vspace{-4mm}
\begin{subfigure}{\textwidth}\centering\includegraphics[width=0.8\columnwidth]{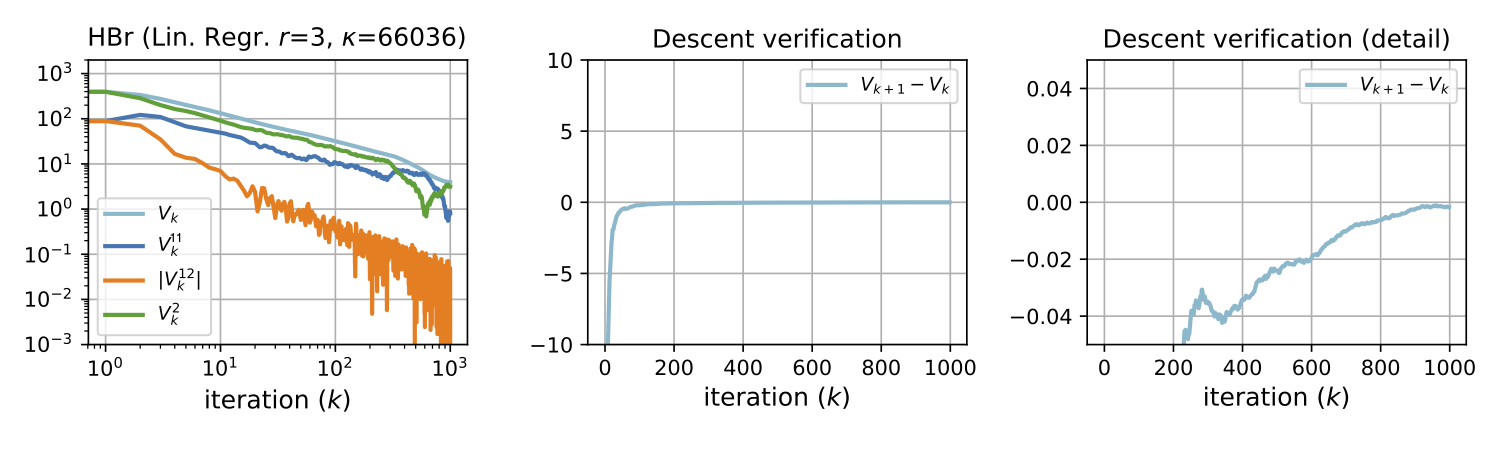}
\end{subfigure}
\vspace{-2mm}
\caption{Dynamics of the Lyapunov function for \texttt{HB}$r$ on linear regression~(ill-conditioned Hessian, with condition number $\kappa$). Shown is the behavior for $r=2,3$ with step-size $1/L$. For $r=2$, $V_{k}$ is constant, as predicted by Prop.~\ref{prop:proof_quadratic_bach}. For $r=3$, $V_{k}$ is decreasing as predicted by Thm.~\ref{thm:proof_quadratic}. }
\label{fig:lyapunov_HBr_linreg_1}
\end{figure}
\vspace{-2mm}
\begin{figure}[ht!]\centering\includegraphics[width=0.8\columnwidth]{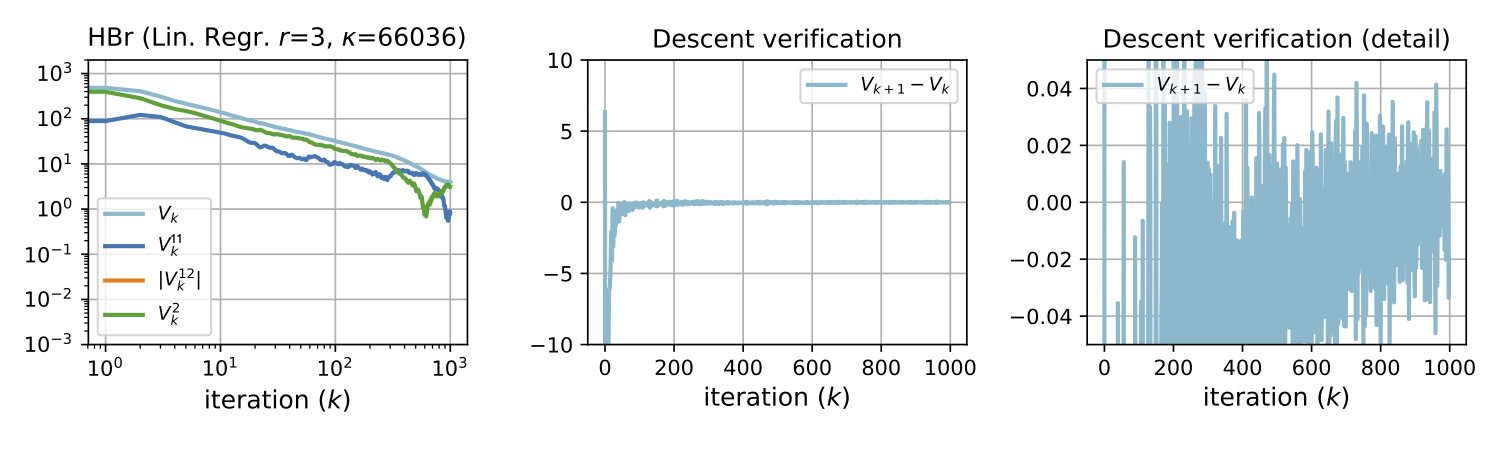}
\vspace{-4mm}
\caption{Same setting of the second example in Figure~\ref{fig:lyapunov_HBr_linreg_1}, but different candidate Lyapunov function (no cross term). This confirms the cross-term is necessary.}
\label{fig:lyapunov_HBr_linreg_3}
\end{figure}
 \vspace{-2mm}
\begin{figure}[ht!]\centering\includegraphics[width=0.35\columnwidth]{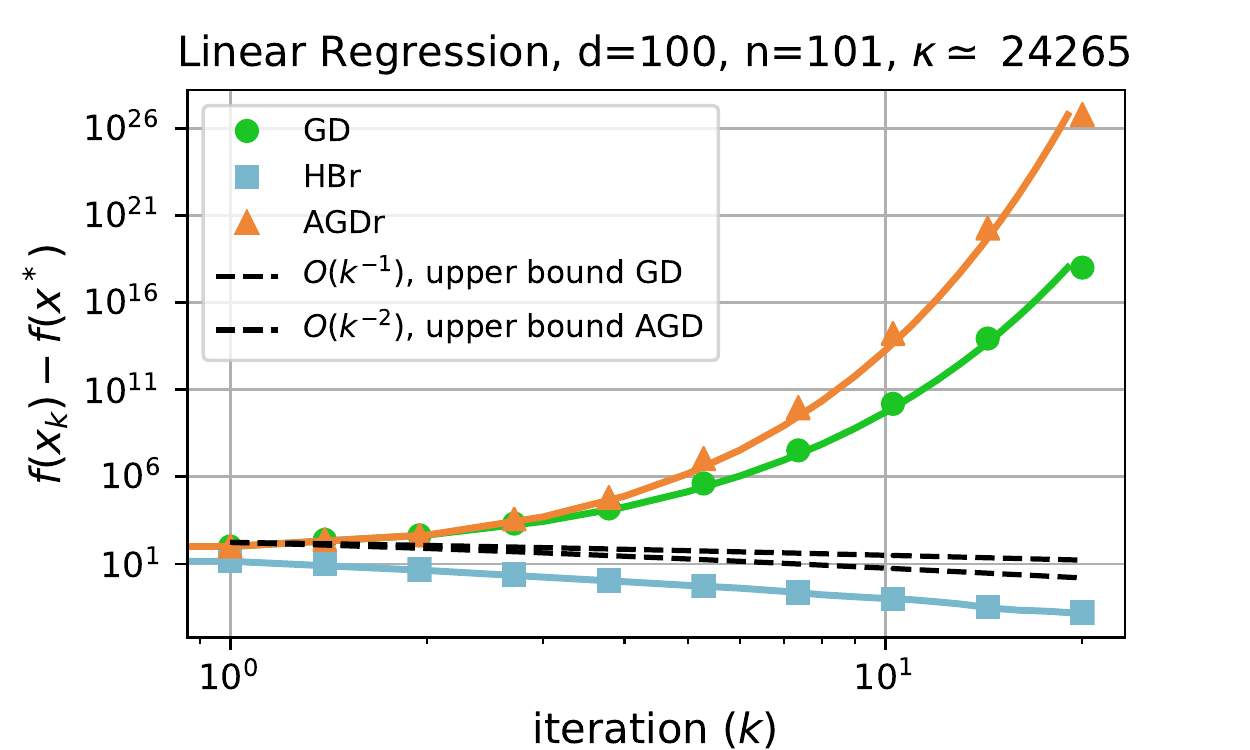}
\vspace{-2mm}
\caption{\texttt{HB}r also works for big step-sizes~(see conditions in Theorem~\ref{thm:proof_quadratic}). Here, used is $h^2=3.9/L$.}
\label{fig:HBr_big_step}
\end{figure} 

\section{Conclusion}
\vspace{-1mm}
In conclusion, the question of whether the Heavy-ball method is globally accelerated for non-strongly-convex quadratic problems has yet to be fully answered, and has attracted the attention of recent research~\cite{wang2022provable}. Our study takes a novel approach by examining momentum through the lens of quadratic invariants of simple harmonic oscillators, and by utilizing the modified Hamiltonian of Stormer-Verlet integrators we were able to construct a Lyapunov function that demonstrates an $O(1/k^2)$ rate for Heavy-ball in the case of convex quadratic problems, where eigenvalues can vanish. This is a promising first step towards potentially proving the acceleration of Polyak's momentum method through Lyapunov function arguments.

\vspace{-1mm}
\section{Acknowledgements}
\vspace{-1mm}
I would like to extend my deepest gratitude to Prof. Boris Polyak, Prof. Christian Lubich, and Konstantin Mishchenko for the stimulating discussions. My appreciation goes to Prof. Aurelien Lucchi and Prof. Thomas Hofmann for their unwavering support and motivation, which helped me to develop the project idea in Spring 2020. Lastly, I cannot express enough my gratitude to Johannes Brahms for his Violinkonzert D-Dur op. 77, which provided the perfect soundtrack to my late-night calculations, igniting my passion and drive to push through the toughest moments.

\printbibliography
\bibstyle{apalike}

\end{document}